\documentclass[a4paper,11pt,reqno]{article}

\usepackage[utf8]{inputenc}
\usepackage[T1]{fontenc}
\usepackage{lmodern}
\usepackage[english]{babel}
\usepackage{microtype,csquotes}
\usepackage[title]{appendix}

\usepackage[backend=biber,hyperref=true,style=alphabetic,giveninits=true,maxnames=50,sorting=nyt]{biblatex}
\usepackage{tikz}
\usetikzlibrary{math}

\definecolor{midnightblue}{rgb}{0.1, 0.1, 0.44}
\usepackage[colorlinks,allcolors=midnightblue,pdfusetitle]{hyperref}
\AtBeginDocument{}
\AtBeginDocument{}
\AtBeginDocument{}
\hypersetup{
	pdfsubject={Concentration of the Born-Jordan distribution},
	pdfkeywords={time-frequency analysis,Born--Jordan distribution,phase-space concentration,critical exponent}}

\usepackage[a4paper,hmargin=2.5cm,bottom=2.5cm,top=2.5cm,footskip=3\baselineskip,
	marginparwidth=2.4cm,marginparsep=0.05cm]{geometry}

\usepackage{enumitem}
\setlist[enumerate,1]{label=(\roman*)}

\usepackage{booktabs}

\usepackage{fsmath}
\usepackage{bm}

\newcommand{\BJ}{\mathrm{BJ}}
\DeclareMathOperator{\Op}{Op}

\newcommand{\zjn}{z_j^{(n)}}
\newcommand{\zjpn}{z_{j'}^{(n)}}

\newcommand{\wkn}{w_k^{(n)}}

\newcommand{\Fkn}{F_k^{(n)}}

\title{On the existence of optimizers for nonlinear time-frequency concentration problems: the Born--Jordan distribution}

\author{Federico Stra\thanks{Politecnico di Torino, \url{federico.stra@polito.it}}
	\and Erling A. T. Svela\thanks{Norwegian University of Science and Technology, \url{erling.a.t.svela@ntnu.no}}
	\and S. Ivan Trapasso\thanks{Politecnico di Torino, \url{salvatoreivan.trapasso@polito.it}}}

\date{}

\begin{document}

\maketitle

\begin{abstract}
	We study the $L^p$ concentration problem for the Born--Jordan distribution in dimension \mbox{$d>1$},
	thus extending the one-dimensional analysis in [Stra--Svela--Trapasso, \emph{J.\ Math.\ Pures Appl.}\ (2026)].
	We show that the existence of concentration optimizers depends on the exponent $p$ with a
	critical threshold at $p_*(d)= \frac{2d}{d-2}$ for $d\geq2$ (with the understanding that
	$p_*(2)=\infty$). In particular, for subcritical exponents $1\leq p<p_*(d)$ we prove that the
	supremum is finite and is attained, whereas for supercritical exponents $p>p_*(d)$ we show that
	the functional is unbounded. We also provide the complete solution in the (significantly more)
	challenging critical regime in dimension $d=2$.
\end{abstract}

\tableofcontents

\paragraph{MSC:} 81S30, 49Q10, 42B10, 94A12.
\vspace{-1em}
\paragraph{Keywords:}
time-frequency analysis, Born--Jordan distribution, phase-space concentration, critical exponent.

\clearpage

\section{Introduction}

The Wigner distribution, defined for \(f\in L^2(\setR^d)\) by
\begin{align*}
	W f(z)
	  & \coloneqq \int_{\setR^d} e^{-2\pi i\xi\cdot y} f(x+y/2) \overline{f(x-y/2)} \d y, &
	z & = (x,\xi) \in \setR^{2d},
\end{align*}
is one of the most popular quadratic phase-space representations for signals in view of its many
desirable properties --- including being real-valued, continuous and energy preserving. However, as
no time-frequency representation may possess every desirable property at once~\cite{Janssen97}, even
the Wigner transform comes with some features that may represent challenging drawbacks in several
applications. For instance, although it may be informally interpreted as a phase-space energy
density of the signal $f$, $Wf$ will in general take both positive and negative values. Moreover,
the inherent sesquilinear structure of the Wigner transform produces peculiar \textit{ghost interferences}.
To be more precise, this means that the magnitude of the cross-term \(2\Re W(f,g)\) in the expansion
of \(W(f+g)\) may be significantly larger than that of the main terms
\(Wf+Wg\)~\cite{Boggiatto-deDonno-Oliaro-2010}. Such scattered interference patterns further
obstruct the interpretation of $Wf$ as a genuine energy density, hence several alternative phase
space distributions have been introduced in order to tame or circumvent these phenomena. In this
connection, the \textit{Cohen class}~\cite{Cohen66} consists of distributions of the form
$Qf = Wf*\sigma$, obtained by smoothing the Wigner transform with a suitable convolution kernel.

A distinguished member of this family is the \emph{Born--Jordan distribution}~\cite{deGosson16,HlawatschAuger}, originally arising in the context of quantization problems in~\cite{BJ1925} and popularized as an average of certain generalized Wigner distributions that is effectively able to reduce the magnitude of the ghost frequencies~\cite{Boggiatto-deDonno-Oliaro-2010,CdGN18}.

\begin{definition}\label{def:Wigner Born-Jordan}
	Let $f\in L^2(\setR^d)$.
	For $\tau\in(0,1)$, define the \emph{$\tau$-Wigner distribution}
	\begin{align}\label{eq:W-tau}
		W_\tau f(z)
		  & \coloneqq \int_{\setR^d} e^{-2\pi i\xi\cdot y} f\bigl(x+\tau y\bigr) \overline{f\bigl(x-(1-\tau)y\bigr)} \d y, &
		z & = (x,\xi) \in \setR^{2d}.
	\end{align}
	The \emph{Born--Jordan distribution} of $f$ is defined by
	\begin{equation}\label{eq:BJ}
		W_\BJ f(z) \coloneqq  \int_0^1 W_\tau f(z) \d\tau.
	\end{equation}
\end{definition}
As a member of the Cohen family, the Born--Jordan distribution has a kernel, which was explicitly determined in~\cite{CdGN17}:
\[
	W_\BJ f=Wf*\Theta_\sigma,\qquad \Theta_\sigma(x,\xi)=\begin{cases}
		-2\mathrm{Ci}(4\pi |x\cdot\xi|)                          & (d=1)  \\
		\mathcal{F}(\bm{1}_{\{|s|\geq 2\}}|s|^{d-2})(x\cdot \xi) & (d>1),
	\end{cases}
\]
where $\mathrm{Ci}(t)=-\int_t^{+\infty} \frac{\cos s}{s} \d s$ is the cosine integral function. This peculiar kernel is the root of the poor behavior of the Born--Jordan distribution regarding regularity and decay, since \(\Theta_\sigma\) is not globally integrable in any \(p\)-norm, and not even locally \(L^\infty\)~\cite{CdGN17}. In fact, many of the properties of the Born--Jordan distribution, such as continuity and boundedness, are only known to hold when \(d=1\) due to the decay of the kernel degrading as \(d\) grows.

\bigskip

In spite of these considerations, the empirical improvement in terms of interference reduction of the Born--Jordan distribution compared to the Wigner one has been recently confirmed in the context of phase-space concentration problems, a classical topic in time-frequency analysis. To be precise, in this note we study the maximal $L^p$ concentration of the Born--Jordan distribution on a
prescribed phase space subset $\Omega\subset \setR^{2d}$ with positive finite Lebesgue measure, and in particular whether the optimal concentration
\begin{equation}\label{eq:concentration problem}
	\sup_{\substack{f\in L^2(\setR^d) \\ \norm{f}_{L^2}^2=1}} \norm{W_\BJ f}_{L^p(\Omega)}
	= \sup_{f\in L^2(\setR^d)\setminus\{0\}} \frac{\norm{W_\BJ f}_{L^p(\Omega)}}{\norm{f}_{L^2}^2}
\end{equation}
is finite and attained. The existence of \textit{local} concentration optimizers has been
investigated only recently in~\cite{Nicola-Romero-Trapasso-2022} for the ambiguity transform and
in~\cite{Stra-Svela-Trapasso-2025} by the authors in the case of the Wigner transform in the
framework of concentration compactness at the phase space level. As a byproduct, in that work we
were able to address the Born--Jordan concentration problem in the $1$-dimensional case, as
summarized below.

\begin{theorem}[$d=1$, \cite{Stra-Svela-Trapasso-2025}]\label{thm:BJ d=1}
	Let $\Omega\subset \setR\times \setR$ be a measurable set with $0<\leb^{2}(\Omega)<\infty$.
	Then for every $p\in[1,\infty)$ the functional
	\[
		J_\BJ^p: L^2(\setR) \to [0,\infty): f\mapsto\norm{W_\BJ f}_{L^p(\Omega)}
	\]
	is sequentially weakly continuous in $L^2(\setR)$. As such, the supremum
	\[
		\sup_{f\in L^2(\setR)\setminus\{0\}} \frac{\norm{W_\BJ f}_{L^p(\Omega)}}{\norm{f}_{L^2(\setR)}^2}
		< \infty
	\]
	is attained by an optimizer. When $p=\infty$ we have
	\[
		\sup_{f\in L^2(\setR)\setminus\{0\}} \frac{\norm{W_\BJ f}_{L^\infty(\Omega)}}{\norm{f}_{L^2(\setR)}^2}
		= \pi,
	\]
	but the supremum is not attained.
\end{theorem}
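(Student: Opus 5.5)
The argument has three parts: a pointwise bound that handles $p=\infty$, a decomposition of a weakly convergent sequence into a limit plus a remainder, and a treatment of the two remainder terms. The last of these, the purely quadratic term in the remainder, is where I expect the real difficulty.

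\emph{Pointwise bound.} For $f,g\in L^2(\setR)$ and $\tau\in(0,1)$, Cauchy--Schwarz in $y$ together with $\int|f(x+\tau y)|^2\d y=\tau^{-1}\norm{f}_{L^2}^2$ gives
\[
	|W_\tau(f,g)(z)|\le (\tau(1-\tau))^{-1/2}\norm{f}_{L^2}\norm{g}_{L^2}.
\]
Since $\int_0^1(\tau(1-\tau))^{-1/2}\d\tau=\pi$, this yields $\norm{W_\BJ(f,g)}_{L^\infty}\le\pi\norm{f}_{L^2}\norm{g}_{L^2}$. (The exponent $d/2$ would make this fail for $d>1$, which explains why the statement is one-dimensional.) By dominated convergence in $\tau$ and density of Schwartz functions, $W_\BJ f$ is moreover continuous.

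\emph{Case $p=\infty$: sharpness and non-attainment.} For sharpness I use the scale-invariant profile $f_\varepsilon=|x|^{-1/2}\bm 1_{\{\varepsilon<|x|<1\}}$. It satisfies $\norm{f_\varepsilon}_{L^2}^2=2\log(1/\varepsilon)$, and
\[
	W_\tau f_\varepsilon(0)=(\tau(1-\tau))^{-1/2}\int|y|^{-1}\bm 1\{\dots\}\d y=2(\tau(1-\tau))^{-1/2}\log(1/\varepsilon)\,(1-o(1)).
\]
Hence the ratio at $z=0$ tends to $\pi$. I then translate $f_\varepsilon$ in phase space so that $0$ moves to a Lebesgue density point of $\Omega$, and use continuity of $W_\BJ f_\varepsilon$ to obtain values close to $\pi$ on a subset of $\Omega$ of positive measure. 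For non-attainment, equality would force equality in Cauchy--Schwarz for a.e.\ $\tau$ at a.e.\ maximizing point. That means $f(x+\tau\,\cdot)$ is proportional to a modulated reflection of $f(x-(1-\tau)\,\cdot)$ for all $\tau$, which forces $|f(x+t)|\propto|t|^{-1/2}$, and this is not in $L^2$.

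\emph{Case $p<\infty$: weak continuity.} Let $f_n\rightharpoonup f$ and write $f_n=f+g_n$ with $g_n\rightharpoonup0$. Then
\[
	W_\BJ f_n=W_\BJ f+2\Re W_\BJ(f,g_n)+W_\BJ g_n .
\]
By the uniform $L^\infty$ bound and $\leb^2(\Omega)<\infty$, it suffices to show that each of the last two terms tends to $0$ in measure on $\Omega$.

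\emph{The cross term.} For fixed $z$, $W_\BJ(f,g_n)(z)=\langle h_z,g_n\rangle$ for an explicit $h_z\in L^2$ with $\norm{h_z}\le\pi\norm f$. So $W_\BJ(f,g_n)\to0$ pointwise, and dominated convergence on $\Omega$ gives convergence in $L^p(\Omega)$.

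\emph{The quadratic term: the main obstacle.} I first get weak convergence. By Moyal's identity for $W_\tau$ and Minkowski's inequality in $\tau$, $W_\BJ g_n$ is bounded in $L^2(\setR^2)$. Its pairing with $\varphi\in L^2(\Omega)$ equals $\langle \Op_\BJ(\bar\varphi\bm 1_\Omega)g_n,g_n\rangle$. This operator is Hilbert--Schmidt, because the Born--Jordan multiplier $\mathrm{sinc}(\pi\zeta_1\zeta_2)$ is bounded by $1$. Compact operators send weakly null sequences to strongly null ones, so $W_\BJ g_n\rightharpoonup0$ in $L^2(\Omega)$.

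Upgrading this to strong convergence is the hard part. Compactness fails exactly along the hyperbolic directions $|\zeta_1\zeta_2|\lesssim1$, which correspond to the symmetries that $W_\BJ$ respects: phase-space translations and the dilations $(x,\xi)\mapsto(\lambda x,\xi/\lambda)$. My first attempt is a profile decomposition of $(g_n)$ modulo these symmetries. Each profile $\phi^j$ drifting to infinity either leaves $\Omega$, or gets squeezed into a strip of vanishing measure, so that $\norm{W_\BJ\phi^j_n}_{L^p(\Omega)}\to0$ by the uniform bound.

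The interaction terms are where Born--Jordan genuinely differs from Wigner. For Wigner, two separated bumps produce a non-decaying ghost at their midpoint. For $W_\BJ$, the average over $\tau$ spreads the cross term $W_\tau(\phi^j_n,\phi^k_n)$ along the segment joining the two centres, at the point $(1-\tau)z_j+\tau z_k$ in the notation $\zjn$, $z_k^{(n)}$. Its density on the bounded set $\Omega$ is then $O(|\zjn-z_k^{(n)}|^{-1})$, which vanishes. The analogous dilation-mismatched interactions are handled by the same oscillation argument. The remainder of the profile decomposition has small $W_\BJ$ norm by a local-smallness estimate.

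Finally, the bound together with weak continuity gives existence of an optimizer by maximizing along a weakly convergent maximizing sequence, using weak lower semicontinuity of the $L^2$ norm and $2$-homogeneity of the functional.
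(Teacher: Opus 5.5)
\textbf{There is a genuine gap in the weak continuity part, at the remainder step.} Your pointwise bound is sound, and so are the sharpness and non-attainment arguments for $p=\infty$. For non-attainment, add that $W_\BJ f\in C_0(\setR^2)$, which follows by dominated convergence in $\tau$. Then the essential supremum over a possibly unbounded $\Omega$ is reached at an actual point $z_0$, where the Cauchy--Schwarz equality argument can be run. Your treatment of the linear term via $W_\BJ(f,g_n)(z)=\langle h_z,g_n\rangle$ is correct, and cleaner than reducing to $f=0$.

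The problem is the quadratic term $W_\BJ g_n$, which carries the whole weak continuity claim. The Hilbert--Schmidt argument only gives $W_\BJ g_n\rightharpoonup 0$ in $L^2(\Omega)$. Even with the uniform $L^\infty$ bound, that says nothing about convergence in measure. In the profile sketch meant to upgrade this, the decisive step is only asserted:
\begin{itemize}
\item you never say in which norm the remainder $w_k^{(n)}$ becomes small;
\item you never state, let alone prove, the ``local-smallness estimate'' that would turn this into smallness of $\norm{W_\BJ w_k^{(n)}}_{L^p(\Omega)}$;
\item you omit the mixed terms $W_\BJ(F_k^{(n)},w_k^{(n)})$ and $W_\BJ(w_k^{(n)},F_k^{(n)})$ between the profile sum $F_k^{(n)}$ and the remainder. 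These need a bilinear version of that estimate, with only $L^2$ control on $F_k^{(n)}$.
\end{itemize}
Because the functional is quadratic, soft information such as $w_k^{(n)}\rightharpoonup 0$ cannot replace this quantitative input. It is the actual content of the proof.

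\textbf{How the paper handles it.} Following the one-dimensional work it quotes, the paper uses the time-frequency profile decomposition modulo $\pi(z)$ alone, with $\lim_{k\to\infty}\limsup_{n\to\infty}\norm{w_k^{(n)}}_{M^\infty}=0$. It combines this with a fixed-$\tau$ estimate, imported from \cite{CT20} and the Wigner analysis:
\[
\lim_{k\to\infty}\limsup_{n\to\infty}\norm{W_\tau(F_k^{(n)},w_k^{(n)})}_{L^p(\Omega)}=0 \qquad \text{for every } \tau\in(0,1),\ p<\infty .
\]
The limits are then moved inside $\int_0^1\d\tau$ by reverse Fatou and dominated convergence. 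The majorant $\leb^2(\Omega)^{1/p}(\tau(1-\tau))^{-1/2}\norm{F}_{L^2}\norm{w}_{L^2}$ is integrable, and this is exactly why every $p<\infty$ is subcritical when $d=1$.

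\textbf{The dilations do not help.} Adding $D_\lambda$ to the symmetry group does not spare you this estimate, since a stronger remainder norm must still be converted into local $L^p$ smallness of $W_\tau(F,w)$. It also creates dilation-mismatched interactions. You dismiss these ``by the same oscillation argument'', but the segment-averaging mechanism says nothing about two profiles with comparable centres and diverging scale ratio. It is simpler to drop the dilations. Rescaled bumps $D_{\lambda_n}\phi$ with $\lambda_n\to0$ or $\infty$ have vanishing $M^\infty$ norm, so they are absorbed into the remainder.

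Your segment argument for translation-separated profiles is the right mechanism for the remaining interactions, and it is what underlies the paper's Step~2. To make it work, control the $\tau$'s near $0$ and $1$, and the small-measure set of bad $\tau$'s, through uniform integrability of the majorant rather than pointwise convergence in $\tau$.
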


These findings are particularly interesting as they show that, in spite of the challenging nature of the Born--Jordan kernel already discussed, the underlying interference reduction mechanism occurs already at the topological level --- it is the first non-trivial example of a time-frequency concentration functional that is proved to be weakly continuous. The goal of the present article is to provide a fairly complete picture of the matter in higher dimensions, solving a problem raised at the end of~\cite{Stra-Svela-Trapasso-2025}.

\subsection{Main results}

We show that the answer to the $L^p$ concentration problem for the Born--Jordan distribution changes
drastically depending on the exponent $p$ and we must therefore distinguish three regimes, where a
crucial role is played by the following \textit{critical exponent}.

\begin{definition}\label{def:critical exponent}
	For every positive integer $d\in\setN_+$ let $p_*(d)$ be the \emph{critical exponent} defined as
	\begin{align}\label{eq:p*}
		p_*(1) & = p_*(2) = \infty,                                                           &
		p_*(d) & = \left(\frac12-\frac1d\right)^{-1} = \frac{2d}{d-2} \qquad \forall\ d\ge 3.
	\end{align}
\end{definition}

We stress that the critical exponent for $d>2$ coincides with the exponent $2^*$ of the Sobolev embedding $W^{1,2}(\setR^d)\to L^{p_*(d)}(\setR^d)$, and this occurrence is not coincidental. Indeed, in (symplectic) Fourier variables the Born--Jordan distribution is obtained from the Wigner distribution by multiplication with $\Theta(x,\xi)=\operatorname{sinc}(\pi\,x\cdot\xi)$, and the asymptotic behavior $\abs{\operatorname{sinc}(t)} \sim |t|^{-1}$ as $\abs{t}\to+\infty$ suggests heuristically that the Cohen smoothing acts like an inverse first-order multiplier, so that $W_{\mathrm{BJ}}f$ gains roughly one derivative compared with $Wf$. In the natural $L^2$-Sobolev scale this is modeled by the Riesz potential $I=(-\Delta)^{-1/2}$, which is bounded $L^2(\setR^d) \to L^p(\setR^d)$ precisely at the critical exponent $p=p_*(d)$ for $d>2$. An equivalent viewpoint that will serve as a main motivation for several constructions in our proofs stems from scale invariance: under the $L^2$-preserving scaling $F_\lambda(x)=\lambda^{d/2}F(\lambda x)$ one has $\|IF_\lambda\|_{L^p} = \lambda^{d/2-1-d/p}\|IF\|_{L^p}$, so scale invariance occurs exactly at $p_*(d)$. In dimension $d=2$ the critical exponent escapes to $p_*(2)=\infty$, which is consistent with the fact that the only borderline case is the endpoint $p=\infty$.

\bigskip

The three following theorems are our main results and they settle the concentration
problem~\eqref{eq:concentration problem} in high dimension, with the exception of the critical case
in dimension greater than $2$, which is left for future investigation.

\begin{theorem}[Subcritical case in $d\geq2$]\label{thm:subcritical}
	Let $d\geq2$ be an integer and $\Omega\subset\setR^d\times\setR^d$ be a measurable set with
	$0<\leb^{2d}(\Omega)<\infty$. Then for every $p\in\bigl[1,p_*(d)\bigr)$ the functional
	\[
		J_\BJ^p: L^2(\setR^d) \to [0,\infty): f\mapsto\norm{W_\BJ f}_{L^p(\Omega)}
	\]
	is sequentially weakly continuous in $L^2(\setR^d)$.
	As a consequence,
	\[
		\sup_{f\in L^2(\setR^d)\setminus\{0\}} \frac{\norm{W_\BJ f}_{L^p(\Omega)}}{\norm{f}_{L^2(\setR^d)}^2}
		< \infty
	\]
	is attained by an optimizer.
\end{theorem}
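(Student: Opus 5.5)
The plan is to show that $f_n\rightharpoonup f$ weakly in $L^2(\setR^d)$ implies $W_\BJ f_n\to W_\BJ f$ strongly in $L^p(\Omega)$; existence of an optimizer then follows by the standard argument. Take a maximizing sequence on the unit sphere, extract a weakly convergent subsequence with limit $f$, $\norm{f}_{L^2}\le1$, and use weak continuity together with $2$-homogeneity of $J_\BJ^p$: the limit $f$ is nonzero (the supremum is positive) and $f/\norm{f}_{L^2}$ is an optimizer. Finiteness of the supremum comes from a uniform bound, obtained in the first step below.

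\emph{Step 1: a uniform estimate $\norm{W_\BJ f}_{L^p(\Omega)}\lesssim\norm{f}_{L^2}^2$ for $p<p_*(d)$.} Work on the Fourier side. The symplectic Fourier transform of $W_\BJ f$ is the ambiguity function $Af$ multiplied by $\Theta(\zeta)=\operatorname{sinc}(\pi\,\eta\cdot y)$, with $\zeta=(\eta,y)$. Since $\Omega$ has finite measure, for $p\ge2$ Hölder lets us bound $L^p(\Omega)$ by larger exponents, so it suffices to bound $\norm{W_\BJ f}_{L^q(\setR^{2d})}$ for $q$ close to $p$. By Hausdorff--Young, this reduces to bounding $\norm{\Theta\,Af}_{L^{q'}}$. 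I would combine $|\Theta(\eta,y)|\lesssim\langle \eta\cdot y\rangle^{-1}$ with the mixed-norm structure of $Af$: for fixed $y$, $\eta\mapsto Af(\eta,y)$ is the Fourier transform of $t\mapsto f(t+y/2)\overline{f(t-y/2)}$, which is in $L^1$ uniformly in $y$. Estimating in polar coordinates in $\eta$ along the direction of $y$, the weight $\langle|\eta||y|\cos\theta\rangle^{-1}$ is where the Riesz-potential scaling $|\cdot|^{-1}$ appears, and it should give the threshold $p_*(d)$. Concretely, I would interpolate between the trivial $L^2$ bound $\norm{Af}_{L^2}=\norm{f}_{L^2}^2$ and a weighted $L^\infty$ bound to reach every $q<p_*(d)$. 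For $1\le p<2$, Hölder on $\Omega$ reduces to $p=2$, where $\norm{W_\BJ f}_{L^2}\le\norm{Wf}_{L^2}=\norm{f}_{L^2}^2$ holds trivially since $|\Theta|\le1$.

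\emph{Step 2: weak continuity.} Decompose $\Theta=\Theta\chi_R+\Theta(1-\chi_R)$ with $\chi_R$ a cutoff to $|\zeta|\le R$. The low-frequency part $\Op$ with a compactly supported symbol maps $Af_n$ (quadratic in $f_n$) to a smooth function. For a fixed bounded $\Omega$, or after truncating $\Omega$ to a large ball using equi-integrability from Step 1 at a slightly larger exponent, weak convergence of $f_n$ gives pointwise convergence of $(Af_n\chi_R\Theta)^\vee$. Indeed, the value at $z$ is $\langle \pi(\cdot)f_n,f_n\rangle$ tested against an $L^1$ function, which is a compact quadratic form: its kernel is the sum of localized time-frequency projections, hence Hilbert--Schmidt. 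With a uniform $L^\infty$ bound, dominated convergence on $\Omega$ then finishes the low part. For the high-frequency tail, I would show that the Step 1 estimate with $\Theta(1-\chi_R)$ has a constant $\epsilon(R)\to0$, uniformly in $\norm{f}_{L^2}\le1$. This should hold precisely because $q<p_*(d)$ leaves room: scaling gives a gain of $R^{-(d/2-1-d/q)}$, which is negative power decay only in the subcritical regime. The remaining issue, that $\Omega$ need not be bounded, is handled by the absolute continuity $\norm{W_\BJ f_n}_{L^p(\Omega\setminus B)}\le \leb(\Omega\setminus B)^{1/p-1/q}C$.

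\emph{Main obstacle.} The real difficulty is Step 1 and its tail version. $\Theta$ does not decay along the cone $\eta\cdot y=0$, so neither a pure $L^1$ nor $L^2$ argument works, and one needs the anisotropic interplay between the decay in $\eta\cdot y$ and the structure of $Af$. I would first try to prove an explicit decomposition of $\Theta(1-\chi_R)$ into dyadic pieces $\{|\eta\cdot y|\sim 2^k\}$ and bound each piece by $L^2\to L^q$ through interpolation of $\norm{Af}_{L^2}$ and $\norm{Af}_{L^\infty}\le\norm{f}^2_{L^2}$. The hope is that the measure of the slab within $|\zeta|\sim2^j$, which is $\sim2^{k}2^{j(2d-2)}$, yields summable geometric series exactly when $q<p_*(d)$.
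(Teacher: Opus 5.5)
Your reduction from weak continuity to the existence of an optimizer is correct, and so is the case $1\le p\le2$. The low-frequency half of Step~2 also works: $\int m(\zeta)\pi(\zeta)\,d\zeta$ with bounded, compactly supported $m$ is Hilbert--Schmidt, which gives pointwise convergence with a uniform bound. Step~1, however, fails, and not only for technical reasons. For $q>2$, Hausdorff--Young requires a bound on $\|\Theta\,Af\|_{L^{q'}(\setR^{2d})}$ with $q'<2$. The facts you invoke, $\|Af\|_{L^2}=\|f\|_{L^2}^2$ and $\|Af\|_{L^\infty}\le\|f\|_{L^2}^2$, only control exponents $\ge2$. Lowering the exponent on a piece costs $|S_{j,k}|^{1/q'-1/2}$, and with your own count $|S_{j,k}|\sim2^k2^{j(2d-2)}$ the bound $2^{-k}|S_{j,k}|^{1/q'-1/2}$ grows in $j$. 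So nothing sums over $|\zeta|\sim2^j$ for any $q>2$: $\Theta$ depends only on $\eta\cdot y$ and gives no decay in $|\zeta|$.

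In fact the Fourier-side quantity is unbounded. Take $f=N^{-1/2}\sum_{k=1}^N T_{x_k}\varphi$ with Gaussian $\varphi$ and the $x_k$ spread (e.g.\ randomly) in a cube of side $\approx N^{2/d}$, so that the differences $x_k-x_l$ are well separated. Then $Af$ has $N^2$ bumps of height $N^{-1}$ centred at $(0,x_k-x_l)$, i.e.\ on the cone $\eta\cdot y=0$. Each bump contributes $\gtrsim N^{-q'}|x_k-x_l|^{-1}$ to $\|\Theta Af\|_{L^{q'}}^{q'}$, so $\|\Theta Af\|_{L^{q'}}^{q'}\gtrsim N^{2-2/d-q'}$ up to logarithms, while $\|f\|_{L^2}\approx1$. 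Hence for $d\ge3$ and $p>\frac{2(d-1)}{d-2}$ your route cannot reach $p_*(d)$, even though $\|W_\BJ f\|_{L^q(\setR^{2d})}\lesssim\|f\|_{L^2}^2$ does hold there.

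The missing idea is that the singularity producing $p_*(d)$ sits at the endpoints $\tau\to0,1$ of the $\tau$-average, not at high frequency. Interpolate Moyal's identity with the Cauchy--Schwarz bound $\|W_\tau f\|_{L^\infty}\le(\tau(1-\tau))^{-d/2}\|f\|_{L^2}^2$. This gives $\|W_\tau f\|_{L^p}\le(\tau(1-\tau))^{-\frac d2(1-\frac2p)}\|f\|_{L^2}^2$, which is integrable on $(0,1)$ exactly when $p<p_*(d)$. Minkowski's inequality in $\tau$ then gives finiteness in two lines.

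The high-frequency half of Step~2 also fails as stated. A bound $\|(\Theta(1-\chi_R)Af)^\vee\|_{L^q(\setR^{2d})}\le\epsilon(R)\|f\|_{L^2}^2$ with $\epsilon(R)\to0$ is false. The dilations $D_\lambda$ preserve $\|f\|_{L^2}$, the multiplier $\Theta$ (which is invariant under $(\eta,y)\mapsto(\eta/\lambda,\lambda y)$), and $\|W_\BJ f\|_{L^q(\setR^{2d})}$. Meanwhile the $L^2$ mass of $A(D_\lambda g)$ in $\{|\zeta|\le R\}$ tends to zero, so for every fixed $R$ the tail norm of $D_\lambda g$ tends to $\|W_\BJ g\|_{L^q}\neq0$. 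Your isotropic scaling heuristic does not apply to this symplectically invariant multiplier, which has no decay along the cone at any frequency. Also, as written, $R^{-(d/2-1-d/q)}$ grows when $q<p_*(d)$.

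Any smallness must therefore come from $\leb^{2d}(\Omega)<\infty$ through a genuine compactness mechanism, and your proposal does not supply one. The paper obtains it from a time-frequency profile decomposition of $f^{(n)}\rightharpoonup0$:
\begin{enumerate}
\item pure profiles leave $\Omega$, because $|z_j^{(n)}|\to\infty$ and each $W_\tau$ is covariant and $C_0$;
\item cross terms are handled as in dimension one;
\item remainders are small by $M^\infty$-smallness.
\end{enumerate}
The $\tau$-integrable majorant $H_p(\tau)$ from the previous paragraph exists precisely when $p<p_*(d)$. It is what lets all these limits pass through $\int_0^1\cdot\,d\tau$, by dominated convergence and, for the remainders, reverse Fatou.
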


\begin{theorem}[Critical case in $d=2$]\label{thm:critical d=2}
	Let $\Omega\subseteq\setR^2\times\setR^2$ be a measurable set with $\leb^4(\Omega)>0$. Then
	\[
		\sup_{f\in L^2(\setR^2)\setminus\{0\}} \frac{\norm{W_\BJ f}_{L^\infty(\Omega)}}{\norm{f}_{L^2(\setR^2)}^2}
		= \infty.
	\]
\end{theorem}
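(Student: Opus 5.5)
We will construct, for every $M>0$, a function $f\in L^2(\setR^2)$ with $\norm{f}_{L^2}=1$ and $\norm{W_\BJ f}_{L^\infty(\Omega)}\ge M$. The guiding heuristic is the one explained after Definition~\ref{def:critical exponent}. In $d=2$ the Born--Jordan smoothing acts like the Riesz potential $I=(-\Delta)^{-1/2}$ on $\setR^4$. The relevant embedding $L^2\to L^\infty$ is borderline: the scaling exponent $d/2-1-d/p$ vanishes at $p=\infty$. We therefore expect a logarithmic divergence, which we will produce by superposing many dyadic scales. We will work with the formula
\[
	W_\BJ f(z)=\int_0^1 W_\tau f(z)\d\tau .
\]
Since $\leb^4(\Omega)>0$, Lebesgue density gives a point $z_0=(x_0,\xi_0)$ of density of $\Omega$. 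By the covariance $W_\tau(\pi(z_0)f)=W_\tau f(\cdot-z_0)$ under time-frequency shifts, valid for every $\tau$ and hence for $W_\BJ$, we may take $z_0=0$. Because $W_\BJ f$ need not be continuous for $d>1$, it is not enough to make $W_\BJ f(0)$ large. We must show $\abs{W_\BJ f}\ge M$ on a set of positive measure inside every small ball $B_r(0)$, so that this set meets $\Omega$.

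\textbf{Step 1 (single scale computation).} Take $g$ a radial Gaussian, or better a function whose Fourier transform is an annulus indicator. Using the symplectic Fourier side, $\mathcal{F}_\sigma W_\BJ f=\operatorname{sinc}(\pi x\cdot\xi)\,\mathcal{F}_\sigma Wf$ with $\mathcal{F}_\sigma Wf$ the ambiguity function $Af$, we will compute the value near $0$ of $W_\BJ g_\lambda$ for dilates $g_\lambda(x)=\lambda g(\lambda x)$. The $L^2$-scaling $(x,\xi)\mapsto(\lambda x,\xi/\lambda)$ is symplectic and leaves the product $x\cdot\xi$ invariant. Consequently $W_\BJ g_\lambda(x,\xi)=W_\BJ g(\lambda x,\xi/\lambda)$, and a single dilate gives no gain. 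The gain must come from \emph{interaction between scales}.

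\textbf{Step 2 (multi-scale superposition).} We will therefore set
\[
	f_N=N^{-1/2}\sum_{k=1}^N g_{2^k}
\]
with $\widehat g$ supported in a fixed annulus, so that the pieces are almost orthogonal and $\norm{f_N}_{L^2}\approx1$. Then $W_\BJ f_N=N^{-1}\sum_{j,k}W_\BJ(g_{2^j},g_{2^k})$. The key claim is that the cross terms satisfy $W_\BJ(g_{2^j},g_{2^k})(z)\approx c$ for $z$ near $0$ and $\abs{j-k}$ in a suitable range. Here $c$ should be a nonzero constant of fixed sign, independent of $j,k$. In $d=2$ the kernel $\mathcal{F}(\bm 1_{\{|s|\ge2\}}|s|^{0})$ has exactly the homogeneity of a $1/|x\cdot\xi|$-type singularity, which gives scale-independent contributions. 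Summing $\sim N^2$ terms of size $c$, divided by $N$, yields $\abs{W_\BJ f_N}\gtrsim N$ near the origin.

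\textbf{Step 3 (from a point to a positive measure set).} We will estimate the region where the lower bound persists. For this we use the explicit $\tau$-integral: $\int_0^1 e^{2\pi i\tau s}\d\tau$ produces the sinc factor, and its oscillation is controlled once $\abs{x\cdot\xi}$ is small relative to the interacting frequencies. The plan is to show $\abs{W_\BJ f_N(z)}\ge cN$ on a region such as $\{\abs{x}\le\delta,\ \abs{\xi}\le\delta\}$ with $\delta$ independent of $N$. Such a region meets $\Omega$ in positive measure by the density-point choice. Letting $N\to\infty$ then proves the theorem.

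The main obstacle is Step 2 together with Step 3. We must extract a \emph{coherent}, sign-definite contribution from the cross terms rather than cancelling oscillations. We must also make it uniform on a fixed neighbourhood, despite the discontinuity of $W_\BJ$ and the non-integrability of $\Theta_\sigma$. If annular pieces do not give a definite sign, we will first try replacing $g_{2^k}$ by one-dimensional tensor structures $h_{2^k}(x_1)\,h_{2^{-k}}(x_2)$, built from one-dimensional dilates $h_\lambda$ (not the two-dimensional $g_\lambda$ above). These exploit the hyperbolic geometry of $x\cdot\xi$ and may produce the logarithmic divergence directly.
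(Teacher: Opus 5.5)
Your Step 2 heuristic is the right one, but the proposal stops exactly where the proof has to begin. You give no mechanism for the sign-coherence of the cross terms, nor for passing from the origin to a neighbourhood, and you flag both as open. The missing idea is to make the building block nonnegative and compactly supported on the side where $W_\tau$ is written as an integral in $y$.

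The paper takes $f_R(x)=|x|^{-1}\mathbf 1_{[1,R]}(|x|)$. This is exactly your multi-scale superposition, realized in physical space: a sum of $\log_2 R$ disjointly supported $L^2$-isometric dilates of $|x|^{-1}\mathbf 1_{[1,2)}(|x|)$, with $\|f_R\|_{L^2}^2=2\pi\log R$. Since $f_R\ge0$, one has $W_\tau f_R(x,0)=\int f_R(x+\tau y)f_R(x-(1-\tau)y)\,dy\ge0$ for every $\tau$, so nothing cancels. All pairs of scales contribute positively; at $x=0$ the pair at separation $m$ interacts only through $\tau\sim 2^{-m}$, with weight $\int_{\tau\sim2^{-m}}\tau^{-1}\,d\tau\sim1$, which is your scale-independent constant. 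Explicitly, $W_\tau f_R(x,0)\ge\pi\tau^{-1}\log(C\tau)$ on $[1/C,1/2]$ with $C=(R-|x|)/(1+|x|)$, which gives $\int_0^1W_\tau f_R(x,0)\,d\tau\gtrsim(\log R)^2$. Compact support confines the integrand to $|y|\le2(R+|x|)$, so $\cos(2\pi\xi\cdot y)\ge\cos(4\pi(R+|x|)|\xi|)$ uniformly in $\tau$. This yields $W_\BJ f_R\gtrsim(\log R)^2$ on $B(0,1)\times B(0,1/(9R))$.

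Your annular-$\hat g$ family can only be handled this way on the Fourier side, where $\hat f_N\ge0$ has compact support. There $W_\BJ$ is covariant under $\mathcal F$, because $\operatorname{sinc}(\pi x\cdot\xi)$ is invariant under $(x,\xi)\mapsto(\xi,-x)$. In physical space $g$ is neither compactly supported nor of one sign, and the ambiguity-side computation you sketch gives no handle on either issue.

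Moreover, Step 3 as formulated is false: no region $\{|x|\le\delta,\ |\xi|\le\delta\}$ with $\delta$ independent of $N$ works for these constructions. For $\tau\le1/2$ one has $W_\tau f_R(0,\xi)=2\pi(\tau(1-\tau))^{-1}\int_{1/\tau}^{R/(1-\tau)}J_0(2\pi|\xi|r)\,r^{-1}\,dr$, which is zero if $\tau<1/(R+1)$. For fixed small $|\xi|=\delta$ the scales $r\gg1/\delta$ oscillate away, so $W_\BJ f_R(0,\xi)=O(\log(1/\delta)^2)$ uniformly in $R$, i.e. $o(\|f_R\|_{L^2}^2)$. By the Fourier covariance the same happens for your $f_N$ at points with $|x|=\delta$, $\xi=0$.

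So the blow-up region necessarily shrinks: in $\xi$ for $f_R$, in $x$ for your $f_N$. This is harmless for $p=\infty$. One only needs $\{|W_\BJ f|\ge M\}$ to contain \emph{some} ball around a density point $z_0=(x_0,\xi_0)$ of $\Omega$, with radius allowed to depend on $M$. Indeed $B(x_0,1)\times B(\xi_0,1/(9R))\supseteq B(z_0,1/(9R))$, and the paper concludes via its product-type density proposition.

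Your worry about discontinuity can also be bypassed. If $f$ is supported in an annulus, $W_\tau f$ vanishes for $x$ near $0$ whenever $\tau$ is near $0$ or $1$. Hence $W_\BJ f_R$ is continuous there, and the single value $W_\BJ f_R(0,0)=2\pi(\log R)^2$ already suffices, as in the paper's appendix.
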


\begin{theorem}[Supercritical case in $d>2$]\label{thm:supercritical d>2}
	Let $d>2$ be an integer and $\Omega\subseteq\setR^d\times\setR^d$ be a measurable set with
	$\leb^{2d}(\Omega)>0$. Then for every $p\in\bigl(p_*(d),\infty\bigr]$
	\[
		\sup_{f\in L^2(\setR^d)\setminus\{0\}} \frac{\norm{W_\BJ f}_{L^p(\Omega)}}{\norm{f}_{L^2(\setR^d)}^2}
		= \infty.
	\]
\end{theorem}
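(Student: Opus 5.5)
We will show that the functional is unbounded for $p\in(p_*(d),\infty]$ by an explicit family of $L^2$-normalized test functions whose Born--Jordan distributions develop a large, concentrated peak. Because $W_\BJ$ intertwines translations and modulations ($W_\BJ(\pi(w)f)=W_\BJ f(\cdot-w)$ for time-frequency shifts $\pi(w)$), we may translate any such peak to a Lebesgue density point $z_0$ of $\Omega$. It therefore suffices to produce, for each $\lambda\to\infty$, a unit vector $f_\lambda$ and a set $E_\lambda\subset\setR^{2d}$ with the following three properties:
\begin{itemize}
	\item $\leb^{2d}(E_\lambda)\to0$;
	\item $E_\lambda$ is contained in a shrinking neighbourhood of the origin, so that $|(z_0+E_\lambda)\cap\Omega|\geq\frac12|E_\lambda|$ for $\lambda$ large;
	\item $|W_\BJ f_\lambda|\geq c\,h_\lambda$ on $E_\lambda$ with $h_\lambda^p|E_\lambda|\to\infty$.
\end{itemize}
Properties of this kind can be engineered on a subset of a thin neighbourhood of the origin. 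For $p=\infty$ we only need $h_\lambda\to\infty$, with $E_\lambda$ of positive measure.

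The heuristic in the introduction suggests using the $L^2$-preserving dilation $f_\lambda(x)=\lambda^{d/2}\varphi(\lambda x)$ and exploiting the fact that $W_\BJ$ acts like $(-\Delta)^{-1/2}$ applied to $Wf$. Concretely, we would pass to the symplectic Fourier side, where $\mathcal F_\sigma W_\BJ f=\operatorname{sinc}(\pi x\cdot\xi)\,Af$ and $Af$ is the ambiguity function. Since $W_\BJ$ commutes with the metaplectic dilation $(x,\xi)\mapsto(\lambda x,\xi/\lambda)$, a pure dilation gives no gain. The gain has to come from a concentrated nonsymmetric profile instead. Our candidate is a radial $\varphi$ for which $W\varphi$ resembles a Riesz-type singular object. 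One natural choice is $\varphi(x)=|x|^{-d/2}\bigl(\log(1/|x|)\bigr)^{-\alpha}\bm 1_{|x|<1/2}$, truncated at scale $\lambda^{-1}$, which is almost critical in $L^2$. We would then compute $W_\BJ f_\lambda(x,0)$ directly from \eqref{eq:W-tau} and \eqref{eq:BJ}:
\[
W_\BJ f(x,0)=\int_0^1\!\!\int f(x+\tau y)\overline{f(x-(1-\tau)y)}\,dy\,d\tau .
\]
Substituting $u=x+\tau y$ and $v=x-(1-\tau)y$, so that $y=u-v$ and $\tau$ is determined by collinearity, the $\tau$-average turns the expression into an integral of $f(u)\overline{f(v)}$ over pairs $u,v$ with $x$ on the segment $[u,v]$. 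The resulting weight is $|u-v|^{-(d-1)}$ times a surface measure, i.e.\ a one-derivative smoothing, which gives
\[
W_\BJ f(x,0)\approx \iint \frac{f(u)\overline{f(v)}}{|u-v|^{d-1}}\,\delta_{[u,v]}(x)\ldots
\]
For $f\geq0$ this is positive. Scaling then shows that at $x$ with $|x|\sim r$, for $r\in(\lambda^{-1},1)$, the value is of order $r^{-(d-2)}\cdot(\text{log factors})$. This holds for $|\xi|\lesssim r^{-1}$ by uncertainty, so we take $E_\lambda=\{(x,\xi):|x|\sim r,\ |\xi|\lesssim r^{-1}\}$ with $|E_\lambda|\sim1$ at each dyadic scale. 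Summing over the dyadic scales $r=2^{-k}$, $k\leq\log_2\lambda$, then gives $\int_{E}|W_\BJ f_\lambda|^p\gtrsim\sum_k 2^{k(d-2)p}\,2^{-kd}\cdot2^{kd}\cdots$. We still have to choose the localisation so that the measure lies in a small neighbourhood of the origin; we would adjust this using the chirp modulation freedom. The output is divergence exactly when $(d-2)p/2>d$ in the correctly normalized count, i.e.\ when $p>2d/(d-2)$.

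The main obstacle is the lower bound $|W_\BJ f_\lambda|\gtrsim h_\lambda$ on a set of non-negligible measure, because the Born--Jordan kernel is oscillatory and cancellations in the $\xi$-variable could destroy the peak. First, we would prove positivity and the size estimate on the slice $\xi=0$ using nonnegative $f$. We would then extend it to $|\xi|\leq c\,r^{-1}$ via a Lipschitz bound in $\xi$, obtained by differentiating the phase $e^{-2\pi i\xi\cdot y}$, which costs a factor $|\xi||y|\lesssim c$ since $|y|\lesssim r$ on the relevant pairs. For $p=\infty$ this already suffices: a single scale with $h\sim\lambda^{(d-2)/2}\to\infty$ on a set of positive measure near $z_0$ proves the claim.
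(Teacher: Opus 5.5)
\textbf{Verdict: there is a genuine gap.} The quantitative core of your proposal is wrong, and the threshold $p_*(d)$ is asserted rather than derived.

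\textbf{The $\xi$-window and the size of the peak.} You extend the $\xi=0$ bound to $|\xi|\lesssim r^{-1}$ on the grounds that $|y|\lesssim r$ on the relevant pairs. That is false. For $f\ge 0$ supported in $B(0,\rho_{\mathrm{out}})$, the $y$-integral in $W_\tau f(x,\xi)$ runs over $|y|\le 2(\rho_{\mathrm{out}}+|x|)$. The terms that create the peak are exactly those with $|y|$ of the order of the outer radius: they are the endpoint terms $\tau\to 0$ (and $\tau\to1$), where $W_\tau f(x,0)$ behaves like $\tau^{-d/2}$ times a logarithm. The divergence of $\int \tau^{-d/2}\,d\tau$ is cut off at $\tau\sim(\text{inner scale})/\rho_{\mathrm{out}}$. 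For your profile at $|x|\sim r$ this gives a value of order $r^{1-d/2}$ (up to logarithms), not $r^{-(d-2)}$. Your Lipschitz argument then only reaches $|\xi|\lesssim 1/\rho_{\mathrm{out}}\sim 1$. (Even the heuristic formula is off: the substitution $u=x+\tau y$, $v=x-(1-\tau)y$ gives weight $|u-v|^{-1}$ against arclength on $[u,v]$, not $|u-v|^{-(d-1)}$.)

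\textbf{Your numbers are impossible.} The disjoint shells $\{|x|\sim 2^{-k}\}\times\{|\xi|\lesssim 2^k\}$ have measure $\sim1$. A bound $|W_\BJ f_\lambda|\gtrsim 2^{k(d-2)}$ there, or even $\gtrsim 2^{k(d/2-1)}$, would make $\|W_\BJ f_\lambda\|_{L^2(\setR^{2d})}$ unbounded for unit vectors, contradicting \autoref{rmk:L2 estimate}. Likewise your dyadic count diverges for every $p$, contradicting \autoref{thm:subcritical}. The threshold is recovered only through an unspecified ``correctly normalized count''. Your $p=\infty$ size $\lambda^{(d-2)/2}$ is in fact inconsistent with your own claim $r^{-(d-2)}$ at $r=\lambda^{-1}$. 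The slice estimate on $\xi=0$ is never actually proved.

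\textbf{The localisation is also unsound.} The sets $\{|x|\sim r\}\times\{|\xi|\lesssim r^{-1}\}$ do not shrink, and there is no chirp freedom to fix this. $W_\BJ$ is not covariant under multiplication by $e^{i\pi x\cdot Cx}$, because the multiplier $\operatorname{sinc}(\pi x\cdot\xi)$ is not invariant under $(x,\xi)\mapsto(x,\xi+Cx)$. Moreover, for the correct regions, which are anisotropic, an ordinary density point does not give $|(z_0+E)\cap\Omega|\ge\frac12|E|$. This is why the paper proves \autoref{prop:Lebesgue differentiation}.

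\textbf{How the paper does it.} The paper uses a single scale.
\begin{enumerate}
\item Take $f_R=|x|^{-d/2}\bm{1}_{[1,R]}(|x|)$, so $\|f_R\|_{L^2}^2=\omega_{d-1}\log R$.
\item Since the $y$-integral is confined to $|y|\le 2(R+|x|)$, one gets $W_\BJ f_R(x,\xi)\ge \cos\bigl(4\pi(R+|x|)|\xi|\bigr)\,W_\BJ f_R(x,0)$.
\item The bound $W_\tau f_R(x,0)\gtrsim \tau^{-d/2}\log(C\tau)$ on $[1/C,1/2]$ then gives $W_\BJ f_R\gtrsim R^{d/2-1}$ on $B(0,1)\times B(0,1/(9R))$.
\item That set has measure $\asymp R^{-d}$, so the ratio is $\gtrsim R^{d/2-1-d/p}/\log R$.
\end{enumerate}

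\textbf{How to repair your multiscale idea.} Use a fixed profile supported in $B(0,1/2)$ and a fixed window $|\xi|<\sigma$. The count then becomes $\sum_k 2^{k((d/2-1)p-d)}$, which diverges exactly for $p>p_*(d)$. This is essentially the paper's $F_\alpha$ construction in the attainment section, and the density step needs a product-type argument in $x$.
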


Let us emphasize that, regarding the critical and supercritical cases, the previous \autoref{thm:critical d=2} and
\autoref{thm:supercritical d>2} do not provide information about the attainment or non-attainment of the
supremum, hence leaving open the question whether there exist a function $f\in L^2(\setR^d)$ such that $W_\BJ f\not\in L^p(\Omega)$, $\norm{W_\BJ f}_{L^p(\Omega)}=\infty$. The picture is completed by the following companion result.

\begin{theorem}\label{thm:attainment}
	Under the assumptions of \autoref{thm:critical d=2} or \autoref{thm:supercritical d>2}, there
	exists $f\in L^2(\setR^d)$ such that $W_\BJ f\not\in L^p(\Omega)$; in other words, the suprema
	in \autoref{thm:critical d=2} and \autoref{thm:supercritical d>2} are attained.
\end{theorem}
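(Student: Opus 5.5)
Given Theorems~\ref{thm:critical d=2} and~\ref{thm:supercritical d>2}, we upgrade unboundedness of the quotient to existence of a single $f$ with $\norm{W_\BJ f}_{L^p(\Omega)}=\infty$. We use a gliding-hump superposition. The proofs of those theorems produce, for every $M>0$, a normalized $f_M\in L^2(\setR^d)$ with $\norm{W_\BJ f_M}_{L^p(\Omega)}\geq M$. We expect them to come from concentrating/scaling profiles, in view of the scale heuristic behind $p_*(d)$. Fix a Lebesgue point region of $\Omega$ and use the covariance of $W_\BJ$ under time-frequency shifts, $W_\BJ(\pi(w)f)=W_\BJ f(\cdot-w)$, to place the mass of these examples where we want. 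Then choose
\[
f=\sum_{k\geq1} c_k\, g_k,\qquad g_k=\pi(w_k)f_{M_k},\qquad \sum_k\abs{c_k}<\infty,
\]
with $c_k=2^{-k}$ and $M_k$ growing so fast that $c_k^2M_k\to\infty$ (for instance $M_k=8^k$). Clearly $f\in L^2$.

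Expanding sesquilinearly, $W_\BJ f=\sum_k c_k^2 W_\BJ g_k+\sum_{j\neq k}c_j\overline{c_k}\,W_\BJ(g_j,g_k)$. The task is to show that the diagonal term $c_k^2W_\BJ g_k$ dominates on a suitable subset $E_k\subset\Omega$. To this end we first truncate: by monotone convergence we may take $E_k$ compact with $\norm{W_\BJ f_{M_k}}_{L^p(E_k)}\geq M_k/2$. For $p=\infty$, we instead take a set of positive measure where $\abs{W_\BJ f_{M_k}}\geq M_k/2$. We then try to control all other terms on $E_k$ by something $o(c_k^2M_k)$.

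For the other pieces $g_j$, $j\neq k$, we would use that the $f_{M}$ can be taken (by density and a cutoff argument) to be Schwartz functions, which is legitimate if $J^p_\BJ$ is lower semicontinuous under strong $L^2$ convergence on compact sets. Inductively, we pick the shifts $w_k$ or the modulations so that $g_k$ is nearly orthogonal, in the cross-Born--Jordan sense, to all previously chosen pieces on the relevant compact sets. Concretely, $W_\BJ(g_j,g_k)\to0$ locally uniformly (on compacts) as $\abs{w_j-w_k}\to\infty$ for fixed Schwartz profiles. This follows from the representation $W_\BJ(g_j,g_k)=\int_0^1 W_\tau(g_j,g_k)\,d\tau$ and the decay of each $W_\tau$ cross term for separated Schwartz functions, with uniformity in $\tau$. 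The construction would be inductive: choose $f_{M_k}$ first, then a far-away translation, then require all later pieces to be so far away that their contributions on $E_1\cup\dots\cup E_k$ are below $1$.

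The main obstacle is that $\Omega$ is fixed, so the humps cannot all be translated off to infinity. They must all live inside $\Omega$, and then separation in phase space is not available. We would instead separate them in scale. If the examples in the cited theorems are dilations $f_\lambda$ concentrating at a point $z_0\in\Omega$ with $\lambda\to\infty$, then we would choose $\lambda_k$ lacunary. We would show that the cross terms $W_\BJ(f_{\lambda_j},f_{\lambda_k})$ are small in $L^p(E_k)$, using that $\langle f_{\lambda_j},f_{\lambda_k}\rangle\to0$ and that the Born--Jordan multiplier $\operatorname{sinc}(\pi x\cdot\xi)$ decouples widely separated dyadic frequency scales. The tail $\sum_{j>k}$ and head $\sum_{j<k}$ would be bounded respectively by a crude bound (small coefficients) and by the fixed regularity of the earlier finitely many pieces. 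If needed, the case $p<\infty$ can be reduced to showing failure of $L^p$ near a single point $z_0$.
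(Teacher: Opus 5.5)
\paragraph{Verdict.} There is a genuine gap. The proposal depends on showing that the off-diagonal and tail contributions on $E_k$ are $o(c_k^2M_k)$, and none of the controls you propose holds in the critical or supercritical regime.

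\emph{The tail.} The terms $\sum_{j>k}$ cannot be handled by ``a crude bound with small coefficients''. No estimate $\norm{W_\BJ(h_1,h_2)}_{L^p(\Omega)}\lesssim\norm{h_1}_{L^2}\norm{h_2}_{L^2}$ exists here; its failure is exactly the content of \autoref{thm:critical d=2} and \autoref{thm:supercritical d>2}. So the small $L^2$ mass of $\sum_{j>k}c_jg_j$ says nothing about its Born--Jordan contribution on $E_k$.

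\emph{The head terms.} Regularity of the earlier pieces does not tame them either. Take a fixed nonnegative bump $\phi$ and a compactly supported $h\ge0$. For $\tau$ near an endpoint, $W_\tau(\phi,h)(x,0)$ is essentially an average of $h$ over a ball of radius comparable to $1-\tau$ around $x$, and integrating in $\tau$ produces a local Riesz potential of $h$. For instance, when $d>2$, $h=F_\alpha$ with $1+d/p<\alpha<d/2$ gives $\abs{W_\BJ(\phi,h)(x,\xi)}\gtrsim\abs{x}^{1-\alpha}$ near the origin, exactly as for $W_\BJ F_\alpha$. Hence $h\mapsto W_\BJ(\phi,h)$ is itself unbounded from $L^2$ into $L^p(\Omega)$. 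The terms $c_j\overline{c_k}W_\BJ(g_j,g_k)$ with $j<k$ are therefore not controlled by $\norm{g_k}_{L^2}=1$, and the weights even work against you, since $c_j/c_k=2^{k-j}$.

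\emph{The scale-decoupling step.} This remains a heuristic, and it rests on a wrong guess about the extremizers. The function $f_R=\abs{x}^{-d/2}\bm1_{[1,R]}(\abs{x})$ is not a dilate of a fixed profile concentrating at a point; it spans all scales in $[1,R]$. Moreover, $W_\BJ f_R$ is large on $B(0,1)\times B(0,1/(9R))$, a set that shrinks only in $\xi$. No decoupling estimate for the multiplier $\operatorname{sinc}(\pi x\cdot\xi)$ is given, and none is readily available.

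\paragraph{The missing idea.} You need a way to avoid estimating cross terms at all. The paper does this in two ways.

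\emph{The abstract proof.} This is the functional-analytic version of your gliding hump. Suppose $W_\BJ f\in L^p(\Omega)$ for every $f$. Polarization then gives $W_\BJ(f,g)\in L^p(\Omega)$ for all $f,g$. Each map $T_gf=W_\BJ(f,g)$ is bounded into $L^2(\Omega)$ (Moyal's identity for every $W_\tau$, then integrate in $\tau$). Hence $T_g$ has closed graph into $L^p(\Omega)$ and is bounded by the closed graph theorem. The symmetry $W_\BJ(f,g)=\overline{W_\BJ(g,f)}$ makes the family $\{T_g\}_{\norm{g}\le1}$ pointwise bounded. Banach--Steinhaus then yields $\norm{W_\BJ(f,g)}_{L^p(\Omega)}\le C\norm{f}\norm{g}$, which contradicts unboundedness.

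\emph{The explicit constructions.} These neutralize the cross terms by their sign rather than their size. For $d=2$, take $f=\sum_n a_nf_{r_n,R_n}\ge0$ with disjoint annuli at rapidly shrinking scales. Then every $W_\BJ(f_m,f_k)(x,0)$ is nonnegative, so $W_\BJ f(x,0)\ge a_n^2W_\BJ f_n(x,0)$. Compact support then gives $\Re W_\tau f(x,\xi)\ge\frac12 W_\tau f(x,0)$ for small $\xi$. For $d>2$, a single $F_\alpha$ already suffices.

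Your scale-separation idea is close in spirit to the $d=2$ construction. What it lacks is the use of positivity of the interactions in place of smallness.
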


For the benefit of the reader, a comprehensive view of the matter is given in Table~\ref{tab:summary}. The only case that eluded our efforts so far is the critical threshold $p=p_*(d)$ in dimension $d>2$. While there is reason to expect the problem~\eqref{eq:concentration problem} to behave similarly to the supercritical regime and therefore be unbounded as well, the analysis here is significantly more challenging from the technical point of view and requires new ideas.

\bigskip

\begin{table}[ht]
	\caption{Summary of the available results for the concentration problem
		\eqref{eq:concentration problem} of the Born--Jordan distribution.
		The supercritical case does not appear in dimension $1$ and $2$ since $p_*(1)=p_*(2)=\infty$.}
	\label{tab:summary}
	\centering
	\begin{tabular}[t]{cccc}
		\toprule
		      & $1\leq p<p_*(d)$                                      & $p=p_*(d)$                                          & $p>p_*(d)$                                               \\\toprule
		$d=1$ & weakly continuous, finite, attained                   & finite, unattained                                  & ---                                                      \\
		      & \cite[Prop.~6.4, Theo.~6.5]{Stra-Svela-Trapasso-2025} & \cite[Prop.~6.8]{Stra-Svela-Trapasso-2025}          &                                                          \\
		\midrule
		$d=2$ & weakly continuous, finite, attained                   & infinite, attained                                  & ---                                                      \\
		      & \autoref{thm:subcritical}                             & \autoref{thm:critical d=2} and~\ref{thm:attainment} &                                                          \\
		\midrule
		$d>2$ & weakly continuous, finite, attained                   & open                                                & infinite, attained                                       \\
		      & \autoref{thm:subcritical}                             &                                                     & \autoref{thm:supercritical d>2} and~\ref{thm:attainment} \\
		\bottomrule
	\end{tabular}
\end{table}

The note is organized as follows. The proofs of \autoref{thm:subcritical}, \autoref{thm:critical d=2}
and \autoref{thm:supercritical d>2} are located in \autoref{sec:subcritical}, \autoref{sec:critical}
and \autoref{sec:supercritical} respectively. In \autoref{sec:attainment} we provide a proof of
\autoref{thm:attainment} based on the unboundedness stated in \autoref{thm:critical d=2} and
\autoref{thm:supercritical d>2}, the closed graph theorem, and the Banach--Steinhaus theorem
(uniform boundedness principle). While this argument is fairly elegant and has the advantage of
encompassing both the critical (when $d=2$) and supercritical (when $d>2$) cases at once, it is
inherently non-constructive. For this reason, in \autoref{sec:attainment-supercritical} we provide
an explicit construction of a function $f\in L^2(\setR^d)$ (in fact, a whole one-parameter family)
such that $W_\BJ f\not\in L^p(\Omega)$ when $d>2$ and $p>p_*(d)$, whereas in
\autoref{sec:attainment-critical} we provide a somewhat more involved construction of a function
$f\in L^2(\setR^2)$ satisfying $W_\BJ f\not\in L^\infty(\Omega)$. We believe that, in spite of the
overlapping scopes, both approaches provide valuable insights on the Born--Jordan distribution. In
particular, the arguments in \autoref{sec:indirect} explain the ultimate reason why a function with
\(W_\BJ f\notin L^p(\Omega)\) must exist, while the explicit constructions in
\autoref{sec:attainment-critical} and \autoref{sec:attainment-supercritical} precisely leverage the
structural weak points of the Born--Jordan distribution.

The manuscript also includes two appendices, where we consider \textit{truncated} Born--Jordan distributions and related properties (including an alternative route to \autoref{thm:critical d=2} and \autoref{thm:supercritical d>2} for $p=\infty$), as well as some consequences of \autoref{thm:attainment} in the context of Born--Jordan pseudodifferential operators with symbols in Lebesgue spaces.

\section{Notation and known facts}

The Lebesgue measure in $\setR^d$ and the $d$-dimensional Hausdorff measure (in any ambient dimension)
are denoted by $\leb^d$ and $\haus^d$ respectively;
$\omega_{d-1}=\haus^{d-1}(S^{d-1})=2\pi^{d/2}/\Gamma(d/2)$ denotes the surface area of the
$(d-1)$\nobreakdash-sphere in $\setR^d$ with unit radius.

If there is a constant \(C>0\) such that \(a\leq Cb\) we write \(a\lesssim b\), and if \(C\) depends on a parameter \(\lambda\) we write \(a\lesssim_\lambda b\). We write $a \asymp b$ to mean that both $a \lesssim b$ and $b \lesssim a$ hold.

For \(x,\xi\in \setR^d\) we denote translation by \(x\) as \(T_x f(y)=f(y-x)\) and modulation by \(\xi\) by \(M_{\xi}f(y)=e^{2\pi i \xi\cdot y} f(y)\). For \(z=(x,\xi)\) we denote the time-frequency shift by \(z\) as \(\pi(z)f(y)=M_\xi T_x f(y)\). If a sequence \(\{f^{(n)}\}\) in \(L^2(\setR^d)\) converges weakly to \(f\) we write \(f^{(n)}\weakto f\).

For \(\tau\in (0,1)\) and \(f,f_1,f_2,g_1,g_2\in L^2(\setR^d)\) the \(\tau\)-Wigner distributions are known to be elements of \(L^2(\setR^{2d})\cap C_0(\setR^{2d})\), covariant under time-frequency shifts, \(W_\tau(\pi(z)f)=T_zW_\tau f\), and the associated cross-$\tau$-Wigner distributions $W_\tau(f,g)(z)=\int_{\setR^d} e^{-2\pi i\xi\cdot y} f\bigl(x+\tau y\bigr) \overline{g\bigl(x-(1-\tau)y\bigr)} \d y $ satisfy Moyal's identity:
\[
	\langle W_{\tau}(f_1,g_1),W_{\tau}(f_2,g_2)\rangle_{L^2(\setR^{2d})}=\langle f_1,f_2\rangle_{L^2(\setR^{d})}\langle g_2,g_1\rangle_{L^2(\setR^{d})}.
\]
We also have the symmetry relation \(\overline{W_{\tau}f}=W_{1-\tau}f\).
When \(d>1\) the Born--Jordan distribution is known to be real-valued and covariant under
time-frequency shifts.

\section{Proofs}

\begin{definition}\label{def:f_rR}
	For every radii $0<r_1\leq r_2$, define the radial function $f_{r_1,r_2}\in L^2(\setR^d)$ given by
	\[
		f_{r_1,r_2}(x) = \abs{x}^{-d/2} \bm1_{[r_1,r_2]}(\abs{x}).
	\]
	For convenience, define also $f_R=f_{1,R}$ for every radius $R\geq1$.
\end{definition}

For the Born--Jordan problem the relevance of the functions $f_{r_1,r_2}$ comes from the fact that
at the origin we have
\[
	W_{\BJ}f(0,0) = \int_0^1 W_\tau f(0,0) \d\tau
	= \int_0^1 \bigl(\tau(1-\tau)\bigr)^{-d/2} \angles[\big]{D_{-\frac{\tau}{1-\tau}}f,f} \d\tau,
\]
where $D_\lambda f(x)=|\lambda|^{d/2}f(\lambda x)$ denotes the dilation by $\lambda$ that is an
isometry in $L^2(\setR^d)$. The functions $f_{r_1,r_2}$ are truncations of the dilation-invariant
potential $\abs{x}^{-d/2}$ and indeed, as $R\to\infty$, $f_R$ will turn out to be an asymptotic
maximizer of the optimization problems in \autoref{thm:critical d=2} and
\autoref{thm:supercritical d>2}.
For the moment, just notice that its squared $L^2$ norm is
\begin{equation}\label{eq:f_R L^2 norm}
	\norm{f_R}_{L^2(\setR^d)}^2
	= \int_{\set{x\in\setR^d}{1\leq\abs{x}\leq R}} \abs{x}^{-d} \d x
	= \int_1^R r^{-d} \omega_{d-1}r^{d-1} \d r
	= \omega_{d-1} \log R.
\end{equation}

\subsection{Preliminary lemmas: Wigner and Born--Jordan estimates}

We now arrive at the crucial estimates necessary to prove \autoref{thm:critical d=2} and
\autoref{thm:supercritical d>2}: \autoref{lem:Re W_tau(x xi) lower bound} provides a lower bound for
$\Re W_\tau f_R(x,\xi)$ in terms of $W_\tau f_R(x,0)$, \autoref{lem:W_tau(x 0) lower bound} bounds
$W_\tau f_R(x,0)$ from below, and finally \autoref{lem:primal BJ lower bound} establishes a lower
bound for $\Re W_\BJ f_R(x,\xi)$ by combining the previous two lemmas and integrating in $\tau$. For
convenience of application in the proofs of the theorems, \autoref{cor:W_BJ lower bound}
reformulates these lower bounds in a summarized way.

\begin{lemma}\label{lem:Re W_tau(x xi) lower bound}
	Let $R\geq1$, $\tau\in(0,1)$ and $(x,\xi)\in\setR^d\times\setR^d$ such that
	\[
		4(R+\abs{x})\abs{\xi} \leq 1.
	\]
	Then
	\[
		\Re W_\tau f_R(x,\xi) \geq \cos\bigl(4\pi(R+\abs{x})\abs{\xi}\bigr) W_\tau f_R(x,0).
	\]
\end{lemma}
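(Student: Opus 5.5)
The idea is to write $W_\tau f_R(x,\xi)$ directly from \eqref{eq:W-tau} and exploit the fact that $f_R$ is real-valued and nonnegative. This makes the integrand $g(y) := f_R(x+\tau y) f_R(x-(1-\tau)y)$ a nonnegative function of $y$. We then have
\[
	W_\tau f_R(x,\xi) = \int_{\setR^d} e^{-2\pi i \xi\cdot y} g(y) \d y,
	\qquad
	W_\tau f_R(x,0) = \int_{\setR^d} g(y) \d y \geq 0,
\]
and taking real parts gives
\[
	\Re W_\tau f_R(x,\xi) = \int_{\setR^d} \cos(2\pi \xi\cdot y)\, g(y) \d y .
\]
It therefore suffices to bound $\cos(2\pi\xi\cdot y)$ from below by $\cos\bigl(4\pi(R+\abs{x})\abs{\xi}\bigr)$ on the support of $g$, and then integrate against $g\geq0$.

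The key step is a support estimate. If $g(y)\neq0$, then both $\abs{x+\tau y}\leq R$ and $\abs{x-(1-\tau)y}\leq R$. Subtracting the two points gives $y = (x+\tau y) - (x-(1-\tau)y)$, hence $\abs{y}\leq 2R$. This is in fact already stronger than needed. A cruder route that matches the stated constant is to use only one of the constraints, e.g.\ $\tau\abs{y}\leq R+\abs{x}$ or $(1-\tau)\abs{y}\leq R+\abs{x}$. Since $\max(\tau,1-\tau)\geq 1/2$, this yields $\abs{y}\leq 2(R+\abs{x})$. Either way, on the support of $g$,
\[
	\abs{2\pi\xi\cdot y} \leq 2\pi\abs{\xi}\,\abs{y} \leq 4\pi(R+\abs{x})\abs{\xi} \leq \pi,
\]
where the last inequality uses the hypothesis $4(R+\abs{x})\abs{\xi}\leq1$.

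Since cosine is even and decreasing on $[0,\pi]$, we get $\cos(2\pi\xi\cdot y)\geq\cos\bigl(4\pi(R+\abs{x})\abs{\xi}\bigr)$ wherever $g(y)\neq0$. Multiplying by $g\geq0$ and integrating gives the claim.

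The only point requiring care is integrability: $g\in L^1(\setR^d)$ because $f_R\in L^2$, by Cauchy--Schwarz after the linear changes of variables. This makes all the integrals above absolutely convergent, so the real part can be taken inside the integral. I do not expect a genuine obstacle. The only potential subtlety is confirming that the hypothesis keeps the phase within $[-\pi,\pi]$, which is exactly what the factor $4$ in the assumption guarantees.
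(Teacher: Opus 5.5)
Your proof is correct and follows the paper's argument: on the support of the nonnegative integrand one has $\abs{y}\le 2(R+\abs{x})$, hence $\cos(2\pi\xi\cdot y)\ge\cos\bigl(4\pi(R+\abs{x})\abs{\xi}\bigr)$ there, and you integrate against $f_R(x+\tau y)f_R(x-(1-\tau)y)\ge 0$. Your extra observation that $y=(x+\tau y)-(x-(1-\tau)y)$ gives $\abs{y}\le 2R$ is also valid, and it would even let you replace $R+\abs{x}$ by $R$ in both the hypothesis and the conclusion.
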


Naturally, the lemma becomes more useful when $8(R+\abs{x})\abs{\xi}<1$, so that
$\abs[\big]{4\pi(R+\abs{x})\abs{\xi}}<\pi/2$ and therefore
$\cos\bigl(4\pi(R+\abs{x})\abs{\xi}\bigr)>0$.

\begin{proof}
	The crucial observation is that in the definition of $W_\tau f_R(x,\xi)$ we can restrict the
	domain of integration to just
	\[
		\set{y\in\setR^d}{\abs{y}\leq2(R+\abs{x})}.
	\]
	Indeed, for the integrand function to be nonzero we must have both $1\leq\abs{x+\tau y}\leq R$ and
	$1\leq\abs{x-(1-\tau)y}\leq R$, from which in particular $\tau\abs{y}-\abs{x}\leq R$ and
	$(1-\tau)\abs{y}-\abs{x}\leq R$, hence
	\[
		\abs{y} \leq \min\{\tau^{-1},(1-\tau)^{-1}\} (R+\abs{x}) \leq 2(R+\abs{x}).
	\]
	Using the fact that $f_R\geq0$ and $\abs{2\pi i\xi\cdot y}\leq2\pi\abs{\xi}2(R+\abs{x})\leq \pi$,
	we finally deduce
	\[
		\begin{split}
			\Re W_\tau f_R(x,\xi)
			&= \int_{\{\abs{y}\leq2(R+\abs{x})\}}
			\cos(2\pi\xi\cdot y) f_R\bigl(x+\tau y\bigr) f_R\bigl(x-(1-\tau)y\bigr) \d y \\
			&\geq \cos\bigl(4\pi(R+\abs{x})\abs{\xi}\bigr) \int_{\{\abs{y}\leq2(R+\abs{x})\}}
			f_R\bigl(x+\tau y\bigr) f_R\bigl(x-(1-\tau)y\bigr) \d y \\
			&= \cos\bigl(4\pi(R+\abs{x})\abs{\xi}\bigr) W_\tau f_R(x,0). \qedhere
		\end{split}
	\]
\end{proof}

\begin{lemma}\label{lem:W_tau(x 0) lower bound}
	Let $R\geq1$, $\tau\in(0,1/2]$, and $x\in\setR^d$ such that
	\[
		\frac{1+\abs{x}}\tau \leq R-\abs{x}.
	\]
	Then
	\[
		W_\tau f_R(x,0) \geq 2^{-d/2}\omega_{d-1} \tau^{-d/2} \log\oleft(\frac{R-\abs{x}}{1+\abs{x}}\tau\right).
	\]
\end{lemma}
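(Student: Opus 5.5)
The plan is to reduce $W_\tau f_R(x,0)=\int_{\setR^d} f_R(x+\tau y)f_R(x-(1-\tau)y)\d y$ to a radial integral of $\abs{v}^{-d}$ over a suitable annulus. The integrand is nonnegative, so it suffices to integrate over any subset of the support and bound the integrand from below there.

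\textbf{Step 1: change of variables.} I will substitute $v=x-(1-\tau)y$. Then $\d y=(1-\tau)^{-d}\d v$ and $x+\tau y=(x-\tau v)/(1-\tau)$, so
\[
	W_\tau f_R(x,0)=(1-\tau)^{-d/2}\int_{\setR^d}\abs{v}^{-d/2}\abs{x-\tau v}^{-d/2}\,
	\bm1_{[1,R]}(\abs{v})\,\bm1_{[1-\tau,(1-\tau)R]}(\abs{x-\tau v})\d v .
\]

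\textbf{Step 2: lower bound on an annulus.} I will restrict to the annulus $A=\{(1+\abs{x})/\tau\le\abs{v}\le R-\abs{x}\}$, which is nonempty by hypothesis. On $A$ the following hold:
\begin{itemize}
	\item $\abs{v}\in[1,R]$, since $(1+\abs{x})/\tau\ge1$;
	\item $\abs{x-\tau v}\ge\tau\abs{v}-\abs{x}\ge1\ge1-\tau$;
	\item $\abs{x-\tau v}\le\abs{x}+\tau\abs{v}\le2\tau\abs{v}$, because $\tau\abs{v}\ge1+\abs{x}\ge\abs{x}$.
\end{itemize}
Using $(1-\tau)^{-d/2}\ge1$, this gives an integrand of at least $(2\tau)^{-d/2}\abs{v}^{-d}$ on $A$. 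Integrating in polar coordinates, exactly as in \eqref{eq:f_R L^2 norm}, gives
\[
	2^{-d/2}\tau^{-d/2}\omega_{d-1}\log\oleft(\frac{R-\abs{x}}{1+\abs{x}}\tau\right),
\]
which is precisely the claimed bound with the constant $2^{-d/2}$ and without any loss inside the logarithm.

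\textbf{Step 3: the upper support constraint (main obstacle).} What remains is the condition $\abs{x-\tau v}\le(1-\tau)R$ on $A$.
\begin{itemize}
	\item When $\tau\le1/3$ it is immediate: $\abs{x-\tau v}\le2\tau\abs{v}\le2\tau R\le(1-\tau)R$.
	\item For $\tau\in(1/3,1/2]$ the crude bound $\abs{x}+\tau\abs{v}$ is not enough. It can exceed $(1-\tau)R$ when $v$ points roughly opposite to $x$ and $\abs{v}$ is close to $R-\abs{x}$.
\end{itemize}
I will try two routes for this range.

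The first route is to shrink the outer radius of the annulus to $\rho=((1-\tau)R-\abs{x})/\tau$ whenever $\rho<R-\abs{x}$. The lost portion of the logarithm would then be compensated by the factor $(1-\tau)^{-d/2}\ge(2/3)^{-d/2}$, which Step 2 discarded, together with the slack in the bound $\abs{x-\tau v}\le2\tau\abs{v}$. The deficit is only $\log\bigl((R-\abs{x})\tau/((1-\tau)R-\abs{x})\bigr)$, and this is bounded because $\tau\le1/2$ and $R-\abs{x}\ge2(1+\abs{x})$.

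If this bookkeeping does not close, the second route is to split $A$ by the sign of $v\cdot x$. On the half where $v\cdot x\ge0$ one has $\abs{x-\tau v}\le\max\{\abs{x},\tau\abs{v}\}\sqrt2$, which fits under $(1-\tau)R$. The factor $1/2$ lost in angular measure would then be recovered by the better bound $\abs{x-\tau v}\le\sqrt2\,\tau\abs{v}$, in place of $2\tau\abs{v}$, on that half.
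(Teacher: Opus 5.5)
\textbf{There is a genuine gap in Step 3, and the substitution in Step 1 is what creates it.} For $\tau\in(1/3,1/2]$ your $v$-annulus is not contained in the support of the integrand. Take $\tau=1/2$, $x\neq0$ and $v=-(R-\abs{x})\,x/\abs{x}$. This $v$ lies in $A$ by hypothesis, yet $\abs{x-\tau v}=(R+\abs{x})/2>(1-\tau)R$. So the lower bound of Step 2 is false on a set of positive measure, and neither of your routes closes as written.

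\emph{Route 1 fails outright.} A bounded additive deficit in the logarithm cannot be absorbed by the multiplicative factor $(1-\tau)^{-d/2}$ when the target logarithm is itself small. Near the boundary of the hypothesis the shrunken annulus is even empty. For example, $\tau=1/2$, $\abs{x}=10$, $R=33$ satisfies the hypothesis ($22\le 23$), and the target is $\omega_{d-1}\log(23/22)>0$. But $\rho=13<22$, so you get only $0$.

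\emph{Route 2 rests on a false inequality.} On $\{v\cdot x\ge0\}\cap A$ you have $\max\{\abs{x},\tau\abs{v}\}=\tau\abs{v}$. The bound $\sqrt2\,\tau\abs{v}\le(1-\tau)R$ fails for $\tau=1/2$, $\abs{x}$ small and $\abs{v}$ close to $R$. The route can be repaired for $d\ge2$ using the sharper estimate $\abs{x-\tau v}^2\le\abs{x}^2+\tau^2\abs{v}^2$, together with $\tau R\ge(1+\tau^2)\abs{x}$, which follows from the hypothesis. You then still need the constant check $(1-\tau)^{-d/2}\ge 2^{1-d/4}$. None of this is in your proposal.

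\textbf{The missing idea is simply not to change variables.} The paper restricts to the annulus $A=\{y:(1+\abs{x})/\tau\le\abs{y}\le R-\abs{x}\}$ in the original variable $y$.
\begin{itemize}
\item The upper support constraints are then automatic because $\tau,1-\tau\le1$: $\abs{x+\tau y}\le\abs{x}+\abs{y}\le R$ and $\abs{x-(1-\tau)y}\le\abs{x}+\abs{y}\le R$.
\item The lower constraints follow from $\abs{x-(1-\tau)y}\ge(1-\tau)\abs{y}-\abs{x}\ge\tau\abs{y}-\abs{x}\ge1$, and similarly $\abs{x+\tau y}\ge1$.
\item Using $\abs{x}\le\tau\abs{y}$ on $A$, one gets $\abs{x+\tau y}\le2\tau\abs{y}$ and $\abs{x-(1-\tau)y}\le\abs{x}+(1-\tau)\abs{y}\le\abs{y}$.
\end{itemize}
Hence the integrand is at least $(2\tau)^{-d/2}\abs{y}^{-d}$ on $A$, and polar coordinates give the claim for every $\tau\in(0,1/2]$ with no case distinction.

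Your $v$-annulus corresponds to $\abs{y}$ ranging up to $R/(1-\tau)$, which can reach $2R$ when $\tau=1/2$. That is exactly why the factor $f_R(x+\tau y)$ escapes its support in your version.
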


\begin{proof}
	By definition
	\[
		\begin{split}
			W_\tau f_R(x,0)
			&= \int_{\setR^d} f_R\bigl(x+\tau y\bigr) \overline{f_R\bigl(x-(1-\tau)y\bigr)} \d y \\
			&= \int_{\setR^d} \abs{x+\tau y}^{-d/2} \bm1_{[1,R]}(\abs{x+\tau y})
			\abs{x-(1-\tau)y}^{-d/2} \bm1_{[1,R]}\bigl(\abs{x-(1-\tau)y}\bigr) \d y.
		\end{split}
	\]
	Consider the nonempty set
	\[
		A_{R,\abs{x},\tau} = \set*{y\in\setR^d}{\frac{1+\abs{x}}\tau \leq \abs{y} \leq R-\abs{x}}.
	\]
	Using $\tau\leq1-\tau\leq1$, for all $y\in A_{R,\abs{x},\tau}$ we have
	\begin{gather*}
		1 \leq \tau\abs{y}-\abs{x} \leq \abs{x+\tau y} \leq \abs{x}+\tau\abs{y} \leq \abs{x}+\abs{y} \leq R, \\
		1 \leq \tau\abs{y}-\abs{x} \leq (1-\tau)\abs{y}-\abs{x} \leq \abs{x-(1-\tau)y} \leq \abs{x}+\abs{y} \leq R,
	\end{gather*}
	therefore
	\[
		\bm1_{[1,R]}(\abs{x+\tau y}) \bm1_{[1,R]}\bigl(\abs{x-(1-\tau)y}\bigr) = 1;
	\]
	moreover, using $\abs{x}\leq\tau\abs{y}$, we have also
	\begin{align*}
		 & \abs{x+\tau y} \leq \abs{x}+\tau\abs{y} \leq 2\tau\abs{y},   &
		 & \abs{x-(1-\tau)y} \leq \abs{x}+(1-\tau)\abs{y} \leq \abs{y}.
	\end{align*}
	From these considerations, integrating in spherical coordinates we deduce that
	\[
		\begin{split}
			W_\tau f_R(x,0)
			&\geq \int_{A_{R,\abs{x},\tau}} (2\tau\abs{y})^{-d/2} \abs{y}^{-d/2} \d y
			= (2\tau)^{-d/2} \int_{(1+\abs{x})/\tau}^{R-\abs{x}} r^{-d} \omega_{d-1} r^{d-1} \d r \\
			&= 2^{-d/2} \omega_{d-1} \tau^{-d/2} \log\oleft(\frac{R-\abs{x}}{1+\abs{x}}\tau\right). \qedhere
		\end{split}
	\]
\end{proof}

\begin{lemma}\label{lem:primal BJ lower bound}
	Let $R\geq1$ and $(x,\xi)\in\setR^d\times\setR^d$ such that
	\[
		8(R+\abs{x})\abs{\xi} \leq 1.
	\]
	\begin{enumerate}
		\item If $d=2$ and $R \geq 2+3\abs{x}$ then
		      \[
			      W_\BJ f_R(x,\xi)
			      \geq \pi \cos\bigl(4\pi(R+\abs{x})\abs{\xi}\bigr) \log\oleft(\frac12\frac{R-\abs{x}}{1+\abs{x}}\right)^2.
		      \]
		\item If $d>2$ and $R \geq 4+5\abs{x}$ then
		      \[
			      W_\BJ f_R(x,\xi)
			      \geq \frac{2^{3-d}\omega_{d-1}\log(2)}{d-2} \cos\bigl(4\pi(R+\abs{x})\abs{\xi}\bigr)
			      \left[\left(\frac{R-\abs{x}}{1+\abs{x}}\right)^{d/2-1}-2^{d-2}\right].
		      \]
	\end{enumerate}
\end{lemma}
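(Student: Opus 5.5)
We first use that $W_\BJ f_R$ is real-valued for $d>1$, so $W_\BJ f_R(x,\xi)=\Re W_\BJ f_R(x,\xi)=\int_0^1\Re W_\tau f_R(x,\xi)\d\tau$. Since $8(R+\abs{x})\abs{\xi}\le1$, \autoref{lem:Re W_tau(x xi) lower bound} applies for every $\tau$, and the factor $c\coloneqq\cos\bigl(4\pi(R+\abs{x})\abs{\xi}\bigr)$ is positive. Because $f_R\ge0$, we also have $W_\tau f_R(x,0)\ge0$. Hence $W_\BJ f_R(x,\xi)\ge c\int_0^1 W_\tau f_R(x,0)\d\tau$. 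The symmetry $\overline{W_\tau f}=W_{1-\tau}f$, together with reality of $W_\tau f_R(x,0)$ (substitute $y\mapsto -y$), gives $W_{1-\tau}f_R(x,0)=W_\tau f_R(x,0)$. This reduces the problem to $2c\int_0^{1/2}W_\tau f_R(x,0)\d\tau$, and by nonnegativity we may restrict the integral further to any subinterval.

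Next, set $\rho\coloneqq\frac{R-\abs{x}}{1+\abs{x}}$. \autoref{lem:W_tau(x 0) lower bound} applies exactly when $\tau\ge 1/\rho$, and then gives $W_\tau f_R(x,0)\ge 2^{-d/2}\omega_{d-1}\tau^{-d/2}\log(\rho\tau)$. The log is only useful when $\rho\tau\ge 2$, say, so I would integrate over $\tau\in[2/\rho,1/2]$. This interval is nonempty iff $\rho\ge4$, which is precisely the condition $R\ge 4+5\abs{x}$ from case (ii). For case (i), $R\ge2+3\abs{x}$ gives $\rho\ge2$, so there I would instead integrate over $[1/\rho,1/2]$ directly.

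In case (i), $d=2$ and $\omega_1=2\pi$, so the integrand is $\pi\,\tau^{-1}\log(\rho\tau)$. With $u=\log(\rho\tau)$ we get
\[
2c\int_{1/\rho}^{1/2}\pi\tau^{-1}\log(\rho\tau)\d\tau=2\pi c\,\tfrac12\log(\rho/2)^2=\pi c\log\bigl(\tfrac12\rho\bigr)^2,
\]
which matches the claim exactly.

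In case (ii), on $[2/\rho,1/2]$ we bound $\log(\rho\tau)\ge\log 2$ and compute $\int_{2/\rho}^{1/2}\tau^{-d/2}\d\tau=\frac{2}{d-2}\bigl[(\rho/2)^{d/2-1}-2^{d/2-1}\bigr]$. Multiplying by $2c\cdot2^{-d/2}\omega_{d-1}\log 2$ gives
\[
\frac{2^{2-d/2}\omega_{d-1}\log2}{d-2}\,c\,2^{1-d/2}\bigl[\rho^{d/2-1}-2^{d-2}\bigr]=\frac{2^{3-d}\omega_{d-1}\log 2}{d-2}\,c\,\bigl[\rho^{d/2-1}-2^{d-2}\bigr],
\]
as required. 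The only delicate point is the bookkeeping: choosing the cutoff $2/\rho$ so that the constants and the hypothesis $R\ge4+5\abs{x}$ line up. We also need to justify the reality of $W_\BJ f_R$, or else work with $\Re$ throughout as the lemmas do.
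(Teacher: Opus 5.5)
Your proposal is correct and matches the paper's proof step by step. You reduce via \autoref{lem:Re W_tau(x xi) lower bound} and the $\tau\leftrightarrow1-\tau$ symmetry to $2c\int_0^{1/2}W_\tau f_R(x,0)\,\d\tau$, apply \autoref{lem:W_tau(x 0) lower bound} on $[1/\rho,1/2]$ when $d=2$, and for $d>2$ keep only $\tau\in[2/\rho,1/2]$ with $\log(\rho\tau)\ge\log 2$; the paper does exactly this after substituting $t=\rho\tau$. One small correction: $c$ is only nonnegative, since it vanishes when $8(R+\abs{x})\abs{\xi}=1$, but nonnegativity is all the argument needs.
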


\begin{proof}
	The assumptions of \autoref{lem:Re W_tau(x xi) lower bound} are satisfied, therefore
	\[
		\begin{split}
			W_\BJ f_R(x,\xi) = \Re W_\BJ f_R(x,\xi)
			&= \int_0^1 \Re W_\tau f_R(x,\xi) \d\tau
			\geq \cos\bigl(4\pi(R+\abs{x})\abs{\xi}\bigr) \int_0^1 W_\tau f_R(x,0) \d\tau \\
			&= 2\cos\bigl(4\pi(R+\abs{x})\abs{\xi}\bigr) \int_0^{1/2} W_\tau f_R(x,0) \d\tau,
		\end{split}
	\]
	where in the last equality we used also that $W_\tau f_R(x,0)=W_{1-\tau} f_R(x,0)$ for every $\tau\in(0,1)$.
	Notice that $\cos\bigl(4\pi(R+\abs{x})\abs{\xi}\bigr) \geq 0$ since $4\pi(R+\abs{x})\abs{\xi}<\pi/2$,
	so we need to find a lower bound for the last integral.

	Both assumptions on $R$ ensure that $C\coloneqq\frac{R-\abs{x}}{1+\abs{x}}\geq2$.
	It is clear that for every $\tau\in[0,1/2]$ we have $W_\tau f_R(x,0)\geq0$; furthermore, for every $\tau\in[1/C,1/2]$
	we can apply \autoref{lem:W_tau(x 0) lower bound} and obtain
	\begin{equation}\label{eq:int W_tau lower bound}
		\int_0^{1/2} W_\tau f_R(x,0) \d\tau
		\geq \int_{1/C}^{1/2} W_\tau f_R(x,0) \d\tau
		\geq 2^{-d/2}\omega_{d-1} \int_{1/C}^{1/2} \tau^{-d/2} \log(C\tau) \d\tau.
	\end{equation}
	To conclude the treatment of this lower bound, we distinguish the two cases of the statement.

	\begin{enumerate}
		\item In this case the lower bound \eqref{eq:int W_tau lower bound} can be evaluated explicitly as
		      \[
			      \pi \int_{1/C}^{1/2} \tau^{-1} \log(C\tau) \d\tau
			      = \pi \int_1^{C/2} \frac{\log t}t \d t
			      = \pi \left[\frac{(\log t)^2}2\right]_1^{C/2}
			      = \frac\pi2 \log\oleft(\frac12\frac{R-\abs{x}}{1+\abs{x}}\right)^2,
		      \]
		      from which the thesis follows.

		\item The stricter assumption on $R$ ensures that $C\geq4$, therefore the lower bound
		      \eqref{eq:int W_tau lower bound} can be estimated as
		      \[
			      \begin{split}
				      2^{-d/2}\omega_{d-1} \int_{1/C}^{1/2} \tau^{-d/2}\log(C\tau) \d\tau
				      &= 2^{-d/2}\omega_{d-1} C^{d/2-1} \int_1^{C/2} t^{-d/2}\log(t) \d t \\
				      &\geq 2^{-d/2}\omega_{d-1} C^{d/2-1} \int_2^{C/2} t^{-d/2}\log(2) \d t \\
				      &= 2^{-d/2}\omega_{d-1}\log(2) C^{d/2-1} \left[\frac{t^{1-d/2}}{1-d/2}\right]_2^{C/2} \\
				      &= \frac{2^{1-d/2}\omega_{d-1}\log(2)}{d-2} C^{d/2-1}
				      \left[2^{1-d/2}-(C/2)^{1-d/2}\right] \\
				      &= \frac{2^{2-d}\omega_{d-1}\log(2)}{d-2} \bigl(C^{d/2-1}-2^{d-2}\bigr),
			      \end{split}
		      \]
		      from which the thesis follows. \qedhere
	\end{enumerate}
\end{proof}

From the above results we deduce the following asymptotic lower bounds for $W_\BJ f_R$,
which will be key for proving \autoref{thm:critical d=2} and \autoref{thm:supercritical d>2}.

\begin{corollary}\label{cor:W_BJ lower bound}
	In any dimension $d\geq2$ there exists a constant $C_d>0$ such that for all $R$ sufficiently large one has:
	\begin{enumerate}
		\item if $d=2$
		      \begin{align*}
			      W_\BJ f_R(x,\xi) & \geq C_2 (\log R)^2                                      &
			                       & \forall (x,\xi) \in B(0,1) \times B\bigl(0,1/(9R)\bigr);
		      \end{align*}
		\item if $d>2$
		      \begin{align*}
			      W_\BJ f_R(x,\xi) & \geq C_d R^{d/2-1}
			                       & \forall (x,\xi) \in B(0,1) \times B\bigl(0,1/(9R)\bigr).
		      \end{align*}
	\end{enumerate}
\end{corollary}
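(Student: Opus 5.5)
The corollary follows by applying \autoref{lem:primal BJ lower bound} on the product set $B(0,1)\times B\bigl(0,1/(9R)\bigr)$ and making every $x$-dependent quantity uniform in $R$. Fix $(x,\xi)$ with $\abs{x}<1$ and $\abs{\xi}<1/(9R)$, and assume $R\geq R_0$ for a threshold $R_0$ chosen below.

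\emph{Hypotheses of the lemma.} The size condition on $R$ reduces to a numerical threshold. Indeed $\abs{x}<1$ gives $2+3\abs{x}<5$ and $4+5\abs{x}<9$, so $R\geq9$ suffices in both cases. For the frequency condition we need $8(R+\abs{x})\abs{\xi}\leq1$. We have
\[
	8(R+\abs{x})\abs{\xi}<\frac{8(R+1)}{9R},
\]
and this is at most $1$ as soon as $R\geq8$.

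\emph{Uniform lower bounds for each factor.}
\begin{itemize}
	\item The cosine factor satisfies $4\pi(R+\abs{x})\abs{\xi}<4\pi(R+1)/(9R)\to4\pi/9<\pi/2$. So for $R$ large the argument stays in $[0,\theta_0]$ with $\theta_0<\pi/2$ fixed, for instance $\theta_0=4\pi/9+\varepsilon$. Hence the cosine is bounded below by a positive constant $c_0$.
	\item The ratio satisfies $\frac{R-\abs{x}}{1+\abs{x}}\geq\frac{R-1}{2}$.
\end{itemize}

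\emph{Case $d=2$.} Part (i) of the lemma gives
\[
	W_\BJ f_R\geq\pi c_0\bigl(\log\tfrac{R-1}{4}\bigr)^2.
\]
Since $\log\frac{R-1}{4}\geq\frac12\log R$ for $R$ large, this is at least $\frac{\pi c_0}{4}(\log R)^2$. Monotonicity of $t\mapsto(\log t)^2$ for $t\geq1$ justifies replacing the ratio by the smaller quantity $\frac{R-1}{4}$.

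\emph{Case $d>2$.} Part (ii) of the lemma gives the bracket
\[
	\Bigl(\tfrac{R-1}{2}\Bigr)^{d/2-1}-2^{d-2}\geq\tfrac12\Bigl(\tfrac{R}{4}\Bigr)^{d/2-1}
\]
for $R$ large. Indeed the first term grows like $R^{d/2-1}$ with $d/2-1>0$, so it eventually dominates the constant $2^{d-2}$. Multiplying by the prefactor and $c_0$ yields $C_dR^{d/2-1}$ with an explicit $C_d$.

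The only mildly delicate point is the cosine factor. The radius $1/(9R)$ must sit strictly inside the regime where the cosine is positive, uniformly in $R$, and this works because $8/9<1$. Everything else is bookkeeping of thresholds in $R$.
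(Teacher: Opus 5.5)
Your proposal is correct and matches the paper's route: the paper states the corollary as a direct consequence of \autoref{lem:primal BJ lower bound}, obtained by verifying its hypotheses on $B(0,1)\times B\bigl(0,1/(9R)\bigr)$ for $R$ large. Your uniform bounds are exactly the bookkeeping this requires: the cosine argument stays below $4\pi/9+\varepsilon<\pi/2$, and $\frac{R-\abs{x}}{1+\abs{x}}\geq\frac{R-1}{2}$.
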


\subsection{Main results}

We are now almost in a position to proceed with the proofs of the main results. Since the blow-up
behavior in \autoref{thm:critical d=2} and \autoref{sec:supercritical} arises from a concentration
phenomenon (namely, $\abs{W_\BJ f_R}$ diverges on a shrinking set), we first require a variant of
the Lebesgue density theorem for sets of product form, to ensure that a non-vanishing fraction of
this diverging contribution remains inside $\Omega$.

\begin{proposition}\label{prop:Lebesgue differentiation}
	Let $d\in\setN_+$ and $\Omega\subseteq\setR^d\times\setR^d$ be a measurable set with
	$0<\leb^{2d}(\Omega)$. Then for every $\rho>0$ there exist $(x_0,\xi_0)\in\setR^d\times\setR^d$ such that
	\[
		\liminf_{\sigma\to0} \frac{\leb^{2d}\bigl(B(x_0,\rho)\times B(\xi_0,\sigma)\cap\Omega\bigr)}
		{\leb^{2d}\bigl(B(x_0,\rho)\times B(\xi_0,\sigma)\bigr)} > 0.
	\]
\end{proposition}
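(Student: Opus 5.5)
The plan is to reduce to a Lebesgue density statement in the $\xi$ variable alone, after averaging over the $x$-ball. We may assume $\leb^{2d}(\Omega)<\infty$, replacing $\Omega$ by $\Omega\cap B(0,N)$ for $N$ large; this only shrinks the numerator. We then set $g=\bm1_\Omega$ and, for a fixed $\rho>0$, define
\[
	h_{x_0}(\xi) \coloneqq \int_{B(x_0,\rho)} \bm1_\Omega(x,\xi) \d x = \leb^d\bigl(\{x\in B(x_0,\rho):(x,\xi)\in\Omega\}\bigr),
\]
which is measurable in $(x_0,\xi)$ by Fubini--Tonelli and lies in $L^1(\setR^d)$ for each $x_0$. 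By Fubini,
\[
	\leb^{2d}\bigl(B(x_0,\rho)\times B(\xi_0,\sigma)\cap\Omega\bigr) = \int_{B(\xi_0,\sigma)} h_{x_0}(\xi)\d\xi,
\]
so the ratio in the statement equals $\leb^d(B(0,\rho))^{-1}$ times the average of $h_{x_0}$ over $B(\xi_0,\sigma)$.

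The first step is to choose $x_0$ with $h_{x_0}\not\equiv0$. Integrating in $x_0$ gives $\int_{\setR^d}\int_{\setR^d} h_{x_0}(\xi)\d\xi\d x_0 = \leb^d(B(0,\rho))\,\leb^{2d}(\Omega)>0$, since each $(x,\xi)\in\Omega$ is counted for all $x_0\in B(x,\rho)$. Hence there is some $x_0$ with $\int h_{x_0}>0$.

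For that $x_0$ we apply the standard Lebesgue differentiation theorem to $h_{x_0}\in L^1_{\mathrm{loc}}(\setR^d)$. For almost every $\xi_0$, the averages of $h_{x_0}$ over $B(\xi_0,\sigma)$ converge to $h_{x_0}(\xi_0)$ as $\sigma\to0$. Because $h_{x_0}\geq0$ and $h_{x_0}$ has positive integral, the set $\{h_{x_0}>0\}$ has positive measure, so we can pick a Lebesgue point $\xi_0$ with $h_{x_0}(\xi_0)>0$. Then the $\liminf$ in the statement is actually a limit, equal to $h_{x_0}(\xi_0)/\leb^d(B(0,\rho))>0$.

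I do not expect a serious obstacle. The only delicate points are the measurability of $(x_0,\xi)\mapsto h_{x_0}(\xi)$, which follows from Tonelli applied to $\bm1_\Omega(x,\xi)\bm1_{B(0,\rho)}(x-x_0)$, and the observation that $\rho$ is held fixed while only $\sigma$ shrinks. That is exactly why the full $2d$-dimensional density theorem is not needed and a one-variable Lebesgue point argument suffices.
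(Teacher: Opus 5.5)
Your proof is correct and follows essentially the same route as the paper: find a ball $B(x_0,\rho)$ such that $\Omega\cap\bigl(B(x_0,\rho)\times\setR^d\bigr)$ is non-negligible, apply the Lebesgue differentiation theorem in the $\xi$ variable alone to $\xi\mapsto\leb^d\bigl(\{x\in B(x_0,\rho):(x,\xi)\in\Omega\}\bigr)$, and conclude by Fubini. The differences are cosmetic. The paper selects $x_0$ by covering $\setR^d$ with countably many balls of radius $\rho$, whereas you average over $x_0$. Your reduction to finite measure is unnecessary, since $h_{x_0}\leq\leb^d(B(0,\rho))$ is already locally integrable.
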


\begin{proof}
	Since
	\[
		\leb^{2d}\oleft(\Omega \setminus \bigcup_{x\in\setZ^d}
		B\oleft(\frac\rho{\sqrt{d}}x,\rho\right)\times\setR^d\cap\Omega\right)
		= 0
	\]
	we deduce that there exists a point $x_0\in\setR^d$ such that
	$\Omega'=B(x_0,\rho)\times\setR^d\cap\Omega$ is non-negligible, i.e.\ $\leb^{2d}(\Omega')>0$.
	Define the function $m\in L^1_\loc(\setR^d)$ given by
	\begin{align*}
		m(\xi)      & = \frac{\leb^d(\Omega'_\xi)}{\leb^d\bigl(B(x_0,\rho)\bigr)}, &
		\Omega'_\xi & = \set{x\in B(x_0,\rho)}{(x,\xi)\in\Omega'}.
	\end{align*}
	We have that $m\geq0$ and $m$ is not identically zero, therefore by the Lebesgue differentiation theorem
	there exists $\xi_0\in\setR^d$ such that
	\[
		\liminf_{\sigma\to0} \dashint_{B(\xi_0,\sigma)} m(\xi) \d\xi > 0.
	\]
	The thesis follows from the observation that, by Fubini--Tonelli,
	\[
		\begin{split}
			\dashint_{B(\xi_0,\sigma)} m(\xi) \d\xi
			&= \frac1{\leb^d\bigl(B(x_0,\rho)\bigr)\leb^d\bigl(B(\xi_0,\sigma)\bigr)}
			\int_{B(\xi_0,\sigma)} \leb^d(\Omega'_\xi) \d\xi                              \\
			&= \frac{\leb^{2d}\bigl(B(x_0,\rho)\times B(\xi_0,\sigma)\cap\Omega'\bigr)}
			{\leb^{2d}\bigl(B(x_0,\rho)\times B(\xi_0,\sigma)\bigr)}                      \\
			&= \frac{\leb^{2d}\bigl(B(x_0,\rho)\times B(\xi_0,\sigma)\cap\Omega\bigr)}
			{\leb^{2d}\bigl(B(x_0,\rho)\times B(\xi_0,\sigma)\bigr)}. \qedhere
		\end{split}
	\]
\end{proof}

\subsubsection{Subcritical case in dimension \texorpdfstring{$d\geq2$}{d ≥ 2}}\label{sec:subcritical}

The treatment of the subcritical case is based on the techniques already employed in
\cite{Stra-Svela-Trapasso-2025} to address the case $p\in[1,\infty)$ in dimension $1$.
It uses the time-frequency profile decomposition introduced in \cite{Nicola-Romero-Trapasso-2022},
to which we refer the reader for details on the technique.

\begin{proof}[Proof of \autoref{thm:subcritical}]
	Let us first show that the supremum \(\sup_{f\in L^2(\setR^d)\setminus\{0\}} \frac{\norm{W_\BJ f}_{L^p(\Omega)}}{\norm{f}_{L^2(\setR^d)}^2}\) is finite. By the triangle inequality it is sufficient to consider \(\frac{\int_0^1\norm{W_{\tau} f}_{L^p(\Omega)}\d\tau}{\norm{f}_{L^2(\setR^d)}^2}.\) For the case \(p\in [1,2]\) Hölder's inequality and Moyal's identity for \(W_{\tau}\) gives, for all \(\tau\in (0,1)\),
	\[
		\norm{W_\tau f}_{L^p(\Omega)}
		\leq \leb^{2d}(\Omega)^{\frac{1}{p}-\frac{1}{2}}\norm{W_\tau f}_{L^2(\setR^{2d})}
		= \leb^{2d}(\Omega)^{\frac{1}{p}-\frac{1}{2}}\norm{f}_{L^2(\setR^d)}^2
	\]
	which shows finiteness for \(p\in [1,2]\). For any \(p>2\) interpolation between \(2\) and \(\infty\) gives the estimate
	\[
		\norm{W_\tau f}_{L^p(\Omega)}
		\lesssim \norm{W_\tau f}_{L^2(\Omega)}^{2/p}\norm{W_\tau f}_{L^\infty(\Omega)}^{1-2/p}
		\leq (\tau(1-\tau))^{-\frac{d}{2}(1-\frac{2}{p})}\norm{f}_{L^2(\setR^d)}^2,
	\]
	where we have used Moyal's identity and the Cauchy-Schwarz estimate \(\norm{W_\tau f}_{L^\infty(\setR^{2d})}\leq (\tau(1-\tau))^{-\frac{d}{2}}\norm{f}_{L^2(\setR^d)}^2.\) This function is integrable in \(\tau\) precisely when \(p<p_*(d),\) which shows finiteness. For future reference note that for any \(f,g\in L^2(\setR^d), \tau\in (0,1)\) and \(p\in [1,\infty)\) we have the estimate
	\[
		\norm{W_\tau (f,g)}_{L^p(\Omega)}\lesssim H_{p}(\tau;f,g,\Omega)
		= \begin{cases}
			\leb^{2d}(\Omega)^{1/p-1/2}\|f\|_{L^2(\setR^d)}\|g\|_{L^2(\setR^d)}   & (1\leq p\leq2) \\
			(\tau(1-\tau))^{-d/2(1-2/p)} \|f\|_{L^2(\setR^d)}\|g\|_{L^2(\setR^d)} & (p>2)
		\end{cases}
	\]
	and \(H_p\) is in \(L^1(0,1)\) if and only if \(p<p_*(d)\).

	Now let \(f^{(n)}\) be a sequence in \(L^2(\setR^d)\) converging weakly to \(f\). It is enough to consider the case \(f=0\), as the general case will then follow by bilinearity and covariance, see~\cite[Prop. 6.4]{Stra-Svela-Trapasso-2025}. We may also assume \(\norm{f^{(n)}}_{L^2(\setR^d)}\leq 1\) without loss of generality. We now perform a time-frequency profile decomposition on \(f^{(n)}\): for each fixed $k\in\setN$ we can write
	\[
		f^{(n)}=\sum_{j=1}^k \pi(z_j^{(n)}) \phi_j + w_k^{(n)},
	\]
	with asymptotic separation $|\zjn-\zjpn |\to\infty$ ($j\ne j'$), $\sum_{j=1}^\infty\|\phi_j\|_{L^2}^2\le 1$ and
	\[
		\lim_{k\to\infty} \limsup_{n\to\infty}\|w_k^{(n)}\|_{M^\infty}=0.
	\]
	Since $f^{(n)}\rightharpoonup 0$ all sequences \(\{\zjn\}\) diverge: if some $z_{j_0}^{(n)}$ were bounded in $\setR^{2d}$, then we would have $\pi(z_{j_0}^{(n)})^*f^{(n)}\rightharpoonup \phi_{j_0}\ne 0$, contradicting $f^{(n)}\rightharpoonup 0$. Hence
	\[
		|z_j^{(n)}| \to \infty \quad \text{for every } j\ge 1.
	\]
	By bilinearity we now have
	\[
		\begin{split}
			W_\BJ(f^{(n)})
			&= \sum_{j=1}^k W_\BJ(\pi(\zjn)\phi_j)
			+ \sum_{1\leq j\neq j' \leq k} W_\BJ(\pi(\zjn)\phi_j,\pi(\zjpn)\phi_{j'}) \\
			& +W_\BJ(\Fkn,\wkn)+W_\BJ(\wkn,\Fkn)+W_\BJ(\wkn),
		\end{split}
	\]
	where \(\Fkn=\sum_{j=1}^k\pi(\zjn)\phi_j\).
	We now proceed to control each term in the expansion.

	\smallskip
	\noindent \textbf{Step 1.} \textit{Vanishing of the pure profiles.}
	By covariance we want to show that
	\[
		\lim_{n\to \infty}\|W_\BJ(\pi(\zjn)\phi_j)\|_{L^p(\Omega)}
		= \lim_{n\to \infty}\|T_{\zjn}W_{\BJ}\phi_j\|_{L^p(\Omega)}
		= 0.
	\]
	By \(C_0\)-decay and covariance of \(W_{\tau}\) for all \(\tau\in (0,1)\) we therefore get
	\[
		\lim_{n\to \infty}\|T_{\zjn}W_{\tau}\phi_j\|_{L^p(\Omega)} = 0
	\]
	for any \(p\in [1,\infty)\). When \(p<p_*\) \(H_p(\tau)\) dominates \(\|T_{\zjn}W_{\tau}\phi_j\|_{L^p(\Omega)}\), so the triangle inequality and dominated convergence yields
	\[
		\begin{split}
			\lim_{n\to \infty}\|T_{\zjn}W_{\BJ}(\phi_j)\|_{L^p(\Omega)}
			&\leq \lim_{n\to \infty}\int_0^1\|T_{\zjn}W_{\tau}(\phi_j)\|_{L^p(\Omega)}\d\tau \\
			&= \int_0^1\lim_{n\to \infty}\|T_{\zjn}W_{\tau}(\phi_j)\|_{L^p(\Omega)}\d\tau
			= 0
		\end{split}
	\]
	for any profile \(\phi_j\).

	\smallskip
	\noindent \textbf{Step 2.} \textit{Vanishing of the cross terms.}
	This is completely analogous to the proof of~\cite[Lemma 6.7]{Stra-Svela-Trapasso-2025}, except that dominated convergence can only be invoked for \(p<p_*\).

	\smallskip
	\noindent \textbf{Step 3.} \textit{Vanishing of remainder terms.}
	The argument will be the same for \(W_{\BJ}(F_k^{(n)},w_k^{(n)})\), \(W_{\BJ}(w_k^{(n)},F_k^{(n)})\) and \(W_{\BJ}(w_k^{(n)},w_k^{(n)})\), so we will only show it for \(W_{\BJ}(F_k^{(n)},w_k^{(n)})\).
	By~\cite[Theorem 4.12]{CT20} and a slight modification of~\cite[Section 4]{Stra-Svela-Trapasso-2025} we get for \(\tau\in (0,1)\) that
	\[
		\lim_{k\to\infty}\limsup_{n\to\infty} \|W_{\tau}(F_k^{(n)},w_k^{(n)})\|_{L^p(\Omega)} = 0,
	\]
	again for all \(p\in [1,\infty)\). Since both \(F_k^{(n)}\) and \(w_k^{(n)}\) are in \(L^2(\setR^d)\), \(\ H_p(\tau;F_k^{(n)},w_k^{(n)},\Omega)\) will dominate \(\|W_{\tau}(F_k^{(n)},w_k^{(n)})\|_{L^p(\Omega)}\) for all \(\tau\in (0,1)\). For \(p<p_*\) we now use, in order, the triangle inequality, the reverse Fatou lemma (which we can use because \(H_p\) dominates) and dominated convergence to get
	\[
		\begin{split}
			\lim_{k\to\infty}\limsup_{n\to\infty}\|W_{\BJ}(F_k^{(n)},w_k^{(n)})\|_{L^p(\Omega)}
			&\leq \lim_{k\to\infty}\limsup_{n\to\infty}\int_0^1\|W_{\tau}(F_k^{(n)},w_k^{(n)})\|_{L^p(\Omega)}\d\tau \\
			&\leq \lim_{k\to\infty}\int_0^1\limsup_{n\to\infty}\|W_{\tau}(F_k^{(n)},w_k^{(n)})\|_{L^p(\Omega)}\d\tau \\
			&= \int_0^1\lim_{k\to\infty}\limsup_{n\to\infty}\|W_{\tau}(F_k^{(n)},w_k^{(n)})\|_{L^p(\Omega)}\d\tau
			= 0.
		\end{split}
	\]

	\smallskip
	\noindent \textbf{Step 4.} \textit{Conclusion.}
	Since all the terms in the profile decomposition vanish in the limit, so must \(\norm{W_\BJ f^{(n)}}_{L^p(\Omega)}\) by the triangle inequality. We conclude that \(J_\BJ^p\) is sequentially weakly continuous on \(L^2(\setR^d)\).
\end{proof}

\begin{remark}
	It is easy to realize that the proof of \autoref{thm:subcritical} yields more than the mere
	existence of optimizers. Indeed, after setting
	\[
		L_{p,\Omega}\coloneqq \sup_{\norm{f}_{L^2(\setR^d)}=1}\norm{W_\BJ f}_{L^p(\Omega)}, \qquad p\in[1,p_*(d)),
	\]
	then every (normalized) maximizing sequence for $L_{p,\Omega}$ is relatively compact in $L^2(\setR^d)$.
	Indeed, if $(f_n)_{n \in \setN}$ is any such maximizing sequence, by weak compactness in
	$L^2(\setR^d)$ we may assume $f_n\rightharpoonup f$. The sequential weak continuity proved in
	\autoref{thm:subcritical} implies
	\[
		\norm{W_\BJ f}_{L^p(\Omega)} = \lim_{n\to\infty}\norm{W_\BJ f_n}_{L^p(\Omega)} = L_{p,\Omega}.
	\]
	If $\norm{f}_{L^2(\setR^d)}<1$, since\footnote{
	This follows by a straightforward argument. Given \(\varphi(x)=e^{-\pi |x|^2}\), we have that
	$W_\BJ\varphi(0,0)>0$ and \(W_\BJ\varphi\) is continuous near the origin, by dominated convergence
	applied to $W_\BJ\varphi(x,\xi)$ as $(x,\xi)\to 0$. Therefore \(W_\BJ\varphi\) is nonzero on some
	ball \(B(0,r)\), and the claim follows by covariance along a Lebesgue density point of $\Omega$.}
	$L_{p,\Omega}>0$, then by homogeneity
	\[
		\frac{\norm{W_\BJ f}_{L^p(\Omega)}}{\norm{f}_{L^2(\setR^d)}^2}
		> \norm{W_\BJ f}_{L^p(\Omega)} = L_{p,\Omega},
	\]
	contradicting the definition of $L_{p,\Omega}$. Hence $\norm{f}_{L^2(\setR^d)}=1$, so $f$ is a maximizer,
	and $f_n\to f$ strongly in $L^2(\setR^d)$.

	In turn, this produces a qualitative form of stability. More precisely, if $\mathcal M_{p,\Omega}$
	denotes the set of optimizers, then for every $\varepsilon>0$ there exists $\eta>0$ such that
	\[
		\text{if} \quad \norm{f}_{L^2(\setR^d)}=1
		\quad\text{and}\quad \norm{W_\BJ f}_{L^p(\Omega)}\ge L_{p,\Omega}-\eta \quad \text{then} \quad
		\inf_{g \in \mathcal M_{p,\Omega}} \norm{f-g}_{L^2(\setR^d)}<\varepsilon.
	\]
	Otherwise, one could construct a maximizing sequence staying at positive distance from
	$\mathcal M_{p,\Omega}$, contradicting the relative compactness just proved.
\end{remark}

\subsubsection{Critical case in dimension \texorpdfstring{$d=2$}{d = 2}}\label{sec:critical}

\begin{proof}[Proof of \autoref{thm:critical d=2}]
	Let $\eps>0$ and $(x_0,\xi_0)\in\setR^2\times\setR^2$ be given by \autoref{prop:Lebesgue differentiation},
	such that
	for every $R$ sufficiently large the intersection $B(x_0,1)\times B\bigl(\xi_0,1/(9R)\bigr)\cap\Omega$
	is non-negligible. This ensures that the essential supremum of a function on this set is lower
	than or equal to the essential supremum on $\Omega$. Define the function $g_R = \pi(x_0,\xi_0)f_R$,
	so that $\norm{g_R}_{L^2(\setR^2)}=\norm{f_R}_{L^2(\setR^2)}$ and
	\[
		\abs{W_\BJ g_R(x,\xi)}
		= \abs{W_\BJ f_R(x-x_0,\xi-\xi_0)}
		\geq W_\BJ f_R(x-x_0,\xi-\xi_0).
	\]
	By \autoref{cor:W_BJ lower bound} we have the lower bound
	\begin{align*}
		\abs{W_\BJ g_R(x,\xi)} & \geq C_2 (\log R)^2                                            &
		                       & \forall (x,\xi) \in B(x_0,1) \times B\bigl(\xi_0,1/(9R)\bigr),
	\end{align*}
	therefore, recalling \eqref{eq:f_R L^2 norm}, we obtain
	\[
		\sup_{f\in L^2(\setR^2)\setminus\{0\}} \frac{\norm{W_\BJ f}_{L^\infty(\Omega)}}{\norm{f}_{L^2(\setR^2)}^2}
		\geq \lim_{R\to\infty} \frac{\norm{W_\BJ g_R}_{L^\infty(\Omega)}}{\norm{g_R}_{L^2(\setR^2)}^2}
		\geq \lim_{R\to\infty} \frac{C_2 (\log R)^2}{\omega_1 \log R}
		= \infty. \qedhere
	\]
\end{proof}

\subsubsection{Supercritical case in dimension \texorpdfstring{$d>2$}{d > 2}}\label{sec:supercritical}

\begin{proof}[Proof of \autoref{thm:supercritical d>2}]
	Let $\eps>0$ and $(x_0,\xi_0)\in\setR^d\times\setR^d$ given by
	\autoref{prop:Lebesgue differentiation} such that
	\[
		\leb^{2d}\bigl(B(x_0,1)\times B\bigl(\xi_0,1/(9R)\bigr)\cap\Omega\bigr)
		\geq \eps
		\leb^{2d}\bigl(B(x_0,1)\times B\bigl(\xi_0,1/(9R)\bigr)\bigr)
	\]
	for every $R$ sufficiently large. Define the function $g_R = \pi(x_0,\xi_0)f_R$, so that
	$\norm{g_R}_{L^2(\setR^d)}=\norm{f_R}_{L^2(\setR^d)}$ and
	\[
		\abs{W_\BJ g_R(x,\xi)}
		= \abs{W_\BJ f_R(x-x_0,\xi-\xi_0)}
		\geq W_\BJ f_R(x-x_0,\xi-\xi_0).
	\]
	By \autoref{cor:W_BJ lower bound} we have the lower bound
	\begin{align*}
		\abs{W_\BJ g_R(x,\xi)} & \geq C_d R^{d/2-1}                                             &
		                       & \forall (x,\xi) \in B(x_0,1) \times B\bigl(\xi_0,1/(9R)\bigr),
	\end{align*}
	from which the case $p=\infty$ readily follows as in the proof of \autoref{thm:critical d=2}. For $p\in\bigl(p_*(d),\infty\bigr)$ we get
	\[
		\begin{split}
			\norm{W_\BJ g_R}_{L^p(\Omega)}
			&\geq \norm{W_\BJ g_R}_{L^p(B(x_0,1)\times B(\xi_0,1/(9R))\cap\Omega)} \\
			&\geq C_d R^{d/2-1} \leb^{2d}\bigl(B(x_0,1)\times B\bigl(\xi_0,1/(9R)\bigr)\cap\Omega\bigr)^{1/p} \\
			&\geq \eps^{1/p} C_d R^{d/2-1} \leb^{2d}\bigl(B(x_0,1)\times B\bigl(\xi_0,1/(9R)\bigr)\bigr)^{1/p} \\
			&= 9^{-d/p} \eps^{1/p} C_d \left(\frac{\omega_{d-1}}{d}\right)^{2/p} R^{d/2-1-d/p}.
		\end{split}
	\]
	Recalling \eqref{eq:f_R L^2 norm}, we obtain
	\[
		\sup_{f\in L^2(\setR^d)\setminus\{0\}} \frac{\norm{W_\BJ f}_{L^p(\Omega)}}{\norm{f}_{L^2(\setR^d)}^2}
		\geq \lim_{R\to\infty} \frac{\norm{W_\BJ g_R}_{L^p(\Omega)}}{\norm{g_R}_{L^2(\setR^d)}^2}
		\gtrsim \lim_{R\to\infty} \frac{R^{d/2-1-d/p}}{\log R}
		= \infty,
	\]
	because
	\[
		d/2-1-d/p
		= d \left(\frac12-\frac1d-\frac1p\right)
		= d \left(\frac1{p_*(d)}-\frac1p\right)
		> 0
	\]
	for $p>p_*(d)$.
\end{proof}

\subsection{Proofs of attainment}\label{sec:attainment}

\subsubsection{Indirect proof of attainment}\label{sec:indirect}

\begin{proof}[Proof of \autoref{thm:attainment}]
	We can assume without loss of generality $\leb^{2d}(\Omega)<\infty$, otherwise it is enough to
	choose a measurable set \(E\subseteq\Omega\) with \(0<\leb^{2d}(E)<\infty\) and prove that
	\(W_\BJ f\notin L^p(E)\), implying \(W_\BJ f\notin L^p(\Omega)\).
	Assume by contradiction that $W_\BJ f\in L^p(\Omega)$ for all $f\in L^2(\setR^d)$.
	By the polarization identity
	\[
		4W_\BJ(f,g) = W_\BJ(f+g) - W_\BJ(f-g) + iW_\BJ(f+ig) - iW_\BJ(f-ig)
	\]
	and Minkowski's inequality we have $W_\BJ(f,g)\in L^p(\Omega)$ for all $f,g\in L^2(\setR^d)$.

	For $g\in L^2(\setR^d)$, let $T_g:L^2(\setR^d)\to L^p(\Omega)$ be the linear map $T_g(f)=W_\BJ(f,g)$.
	We claim that $T_g$ has closed graph. Indeed, $T_g:L^2(\setR^d)\to L^2(\Omega)$ is continuous
	because
	\[
		\norm{T_g(f)}_{L^2(\Omega)}
		= \norm{W_\BJ (f,g)}_{L^2(\Omega)}
		\leq \norm{f}_{L^2(\setR^d)} \norm{g}_{L^2(\setR^d)}
	\]
	(see \autoref{rmk:L2 estimate} or~\cite[Theorem 1.4]{CdGN18}), hence the graph
	\[
		\graph(T_g)
		= \set*{\bigl(f,T_g(f)\bigr)}{f\in L^2(\setR^d)}
		\subset L^2(\setR^d) \times L^p(\Omega)
	\]
	is closed with respect to the coarser topology induced by the immersion in
	$L^2(\setR^d)\times L^2(\Omega)$, and therefore with respect to the native product topology too.
	By the closed graph theorem this means that $T_g:L^2(\setR^d)\to L^p(\Omega)$ is continuous,
	i.e.\ $\norm{T_g(f)}_{L^p(\Omega)} \leq C(g) \norm{f}_{L^2(\setR^d)}$.

	Since $W_\BJ(f,g)=\overline{W_\BJ(g,f)}$, we have also
	\[
		\norm{T_gf}_{L^p(\Omega)}
		= \norm{W_\BJ(f,g)}_{L^p(\Omega)}
		= \norm{W_\BJ(g,f)}_{L^p(\Omega)}
		= \norm{T_fg}_{L^p(\Omega)}
		\leq C(f) \norm{g}_{L^2(\setR^d)}.
	\]

	Consider the family of continuous linear operators
	\[
		\mathcal{T}
		= \set{T_g}{g\in L^2(\setR^d),\ \norm{g}_{L^2(\setR^d)}\leq1}
		\subset \mathcal{B}\bigl(L^2(\setR^d),L^p(\Omega)\bigr).
	\]
	It is pointwise bounded because for every $f\in L^2(\setR^d)$ we have
	\[
		\sup_{T\in\mathcal{T}} \norm{Tf}_{L^p(\Omega)}
		= \sup_{\substack{g\in L^2(\setR^d) \\ \norm{g}_{L^2(\setR^d)}\leq1}} \norm{T_gf}_{L^p(\Omega)}
		\leq \sup_{\substack{g\in L^2(\setR^d) \\ \norm{g}_{L^2(\setR^d)}\leq1}} C(f) \norm{g}_{L^2(\setR^d)}
		\leq C(f).
	\]
	By Banach--Steinhaus theorem the family of operators is uniformly bounded, that is
	\[
		C
		\coloneqq \sup_{\substack{g\in L^2(\setR^d) \\ \norm{g}_{L^2(\setR^d)}\leq1}}
		\norm{T_g}_{B\bigl(L^2(\setR^d),L^p(\Omega)\bigr)}
		< \infty.
	\]
	This means that
	\[
		\norm{W_\BJ(f,g)}_{L^p(\Omega)} \leq C \norm{f}_{L^2(\setR^d)} \norm{g}_{L^2(\setR^d)},
	\]
	which in particular implies
	\[
		\norm{W_\BJ f}_{L^p(\Omega)} \leq C \norm{f}_{L^2(\setR^d)}^2,
	\]
	contradicting \autoref{thm:critical d=2} or \autoref{thm:supercritical d>2}.
\end{proof}

\begin{remark}\label{rmk:L2 estimate}
	$\tau$-Wigner can be written as a Fourier transform
	\begin{align*}
		W_\tau(f,g)(x,\xi) & = \mathcal{F}h_x(\xi),                             &
		h_x(y)             & = f(x+\tau y) \overline{g\bigl(x-(1-\tau)y\bigr)}.
	\end{align*}
	By Plancherel identity, its squared $L^2$ norm is
	\[
		\begin{split}
			\norm{W_\tau(f,g)}_{L^2(\setR^{2d})}^2
			&= \int_{\setR^d} \norm{\mathcal{F}h_x}_{L^2(\setR^d)}^2 \d x
			= \int_{\setR^d} \norm{h_x}_{L^2(\setR^d)}^2 \d x \\
			&= \int_{\setR^d} \int_{\setR^d} \abs*{f(x+\tau y)}^2 \abs*{g\bigl(x-(1-\tau)y\bigr)}^2 \d y \d x
		\end{split}
	\]
	which, by the linear change of variables $(u,v)=\bigl(x+\tau y,x-(1-\tau)y\bigr)$ with Jacobian
	\[
		\abs*{\det\begin{pmatrix}I & \tau I \\ I & -(1-\tau)I\end{pmatrix}}
		= \abs*{\det\begin{pmatrix}I & \tau I \\ I-I & -(1-\tau)I-\tau I\end{pmatrix}}
		= \abs*{\det\begin{pmatrix}I & \tau I \\ 0 & -I\end{pmatrix}}
		= 1,
	\]
	is equal to
	\[
		\begin{split}
			&= \int_{\setR^d} \int_{\setR^d} \abs{f(u)}^2 \abs{g(v)}^2 \d u \d v
			= \norm{f}_{L^2(\setR^d)}^2 \norm{g}_{L^2(\setR^d)}^2.
		\end{split}
	\]
	Integrating in $\tau$ we finally get
	\[
		\norm{W_\BJ(f,g)}_{L^2(\setR^{2d})}^2
		\leq \int_0^1 \norm{W_\tau(f,g)}_{L^2(\setR^{2d})}^2 \d\tau
		\leq \norm{f}_{L^2(\setR^d)}^2 \norm{g}_{L^2(\setR^d)}^2.
	\]
\end{remark}

\subsubsection{Critical case in dimension \texorpdfstring{$d=2$}{d = 2}}
\label{sec:attainment-critical}

The construction will make use of the functions $f_{r,R}$ defined in \autoref{def:f_rR},
whose squared $L^2$ norm is
\begin{equation}\label{eq:f_rR L^2 norm}
	\norm{f_{r,R}}_{L^2(\setR^d)}^2
	= \int_{\setR^d} \abs{x}^{-d} \bm1_{[r,R]}(\abs{x}) \d x
	= \int_r^R \rho^{-d} \omega_{d-1}\rho^{d-1} \d\rho
	= \omega_{d-1} \log(R/r).
\end{equation}

Although we are currently able to reach the conclusion only in dimension $d=2$, we state the
preliminary lemmas in any dimension because in the future they might prove useful to tackle the
critical case in full generality.

\begin{lemma}\label{lem:truncate_y}
	Let $R_0,R_1,R>0$ be some positive radii and let $f,g\in L^2(\setR^d)$ be such that
	$\supp f\subseteq B_{R_0}$ and $\supp g\subseteq B_{R_1}$.
	Then for all $x\in B_R$ and $\xi\in\setR^d$ we have
	\[
		W_\tau(f,g)(x,\xi)
		= \int_{B_{R_0+R_1+2R}} e^{-2\pi i\xi\cdot y} f(x+\tau y) \overline{g\bigl(x-(1-\tau)y\bigr)}
		\d y.
	\]
\end{lemma}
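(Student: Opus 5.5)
The plan is to show that the integrand vanishes identically outside the ball $B_{R_0+R_1+2R}$, exactly as in the proof of \autoref{lem:Re W_tau(x xi) lower bound}. Fix $\tau\in(0,1)$, $x\in B_R$ and $\xi\in\setR^d$. The integrand $e^{-2\pi i\xi\cdot y} f(x+\tau y)\overline{g(x-(1-\tau)y)}$ can be nonzero only if both $x+\tau y\in\supp f\subseteq B_{R_0}$ and $x-(1-\tau)y\in\supp g\subseteq B_{R_1}$. (We work with representatives for which these inclusions hold pointwise; the integral is insensitive to the choice of representative for a.e.\ $y$.)

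The key step is to eliminate the dependence on $\tau$ by subtracting the two constraints. The identity
\[
	(x+\tau y)-\bigl(x-(1-\tau)y\bigr)=y
\]
gives, whenever the integrand is nonzero,
\[
	\abs{y}\leq\abs{x+\tau y}+\abs{x-(1-\tau)y}< R_0+R_1.
\]
This is already stronger than what is claimed, since $R_0+R_1\le R_0+R_1+2R$.

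If instead one argues as in \autoref{lem:Re W_tau(x xi) lower bound}, the bound carries the $x$-contribution. From $\tau\abs{y}\le R_0+\abs{x}$ and $(1-\tau)\abs{y}\le R_1+\abs{x}$, summing yields $\abs{y}\le R_0+R_1+2\abs{x}< R_0+R_1+2R$. This explains the stated radius, and I would present either route.

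Hence the integrand is zero for a.e.\ $y\notin B_{R_0+R_1+2R}$, and restricting the domain of integration does not change the value. There is no real obstacle. The only point needing a word is that the defining integral of $W_\tau(f,g)$ converges absolutely: this holds by Cauchy--Schwarz after the change of variables $u=x+\tau y$, $v=x-(1-\tau)y$, so the restriction is legitimate for every $(x,\xi)$ and not just almost every one.
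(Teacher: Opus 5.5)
Your proof is correct. Your second route, bounding $\tau\abs{y}\le R_0+\abs{x}$ and $(1-\tau)\abs{y}\le R_1+\abs{x}$ via the reverse triangle inequality and then summing, is exactly the paper's argument.

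Your first route is genuinely different, and sharper. Writing $y=(x+\tau y)-\bigl(x-(1-\tau)y\bigr)$ and applying the ordinary triangle inequality gives $\abs{y}\le R_0+R_1$ uniformly in $x\in\setR^d$ and $\tau\in(0,1)$. So the hypothesis $x\in B_R$ and the extra $2R$ in the radius are superfluous. The paper's route bounds each factor separately from below and therefore inherits the $\abs{x}$-dependence. Its only advantage is that it mirrors the proof of \autoref{lem:Re W_tau(x xi) lower bound} verbatim.

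What your version buys is cleaner hypotheses downstream. In the cross-Wigner lower bound stated right after this lemma, the frequency threshold could be taken as $\abs{\xi}\le \frac{1}{6(R_0+R_1)}$, independent of $R$. Likewise, in \autoref{lem:Re W_tau(x xi) lower bound} one gets $\abs{y}\le 2R$, so the condition $4(R+\abs{x})\abs{\xi}\le 1$ could be replaced by $4R\abs{\xi}\le 1$.

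Your remarks on representatives and absolute convergence are fine. Read the change of variables as two separate rescalings, one in each factor, for fixed $x$ after Cauchy--Schwarz. This gives the bound $(\tau(1-\tau))^{-d/2}\norm{f}_{L^2}\norm{g}_{L^2}$.
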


\begin{proof}
	In the integral defining $W_\tau(f,g)(x,\xi)$, the integrand function is nonzero if and only if both
	$\abs{x+\tau y}\leq R_0$ and $\abs{x-(1-\tau)y}\leq R_1$. By the reverse triangle inequality, from these
	we deduce $\tau\abs{y}-\abs{x}\leq R_0$ and $(1-\tau)\abs{y}-\abs{x}\leq R_1$. Summing up and using
	$\abs{x}\leq R$ we get $\abs{y}\leq R_0+R_1+2\abs{x}\leq R_0+R_1+2R$. Therefore we can restrict
	the defining integral to the ball with this radius.
\end{proof}

By invoking~\autoref{lem:truncate_y} we deduce the following estimates as straightforward
generalizations of Lemmas~\ref{lem:Re W_tau(x xi) lower bound}~to~\ref{lem:primal BJ lower bound}
and~\autoref{cor:W_BJ lower bound}.

\begin{lemma}\label{lem: cross-Re W_tau(x xi) lower bound}
	Let $R_0,R_1,R>0$ be some positive radii and let $f,g\in L^2\bigl(\setR^d;[0,\infty)\bigr)$ be such that
	$\supp f\subseteq B_{R_0}$ and $\supp g\subseteq B_{R_1}$.
	Then for all $x\in B_R$ and $\xi\in\setR^d$ with
	\[
		\abs{\xi} \leq \frac{\pi/3}{2\pi(R_0+R_1+2R)}
	\]
	we have
	\[
		\Re W_\tau(f,g)(x,\xi) \geq \frac12 W_\tau(f,g)(x,0).
	\]
\end{lemma}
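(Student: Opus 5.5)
The plan is to repeat the argument of \autoref{lem:Re W_tau(x xi) lower bound}, with \autoref{lem:truncate_y} supplying the support restriction in $y$. Fix $\tau\in(0,1)$, $x\in B_R$, and $\xi$ satisfying the stated bound. By \autoref{lem:truncate_y}, the integration in the definition of $W_\tau(f,g)(x,\xi)$ may be restricted to the ball $B_{R_0+R_1+2R}$. Since $f,g\geq0$ are real, the product $f(x+\tau y)\,g\bigl(x-(1-\tau)y\bigr)$ is nonnegative. Taking real parts therefore gives
\[
	\Re W_\tau(f,g)(x,\xi) = \int_{B_{R_0+R_1+2R}} \cos(2\pi\xi\cdot y)\, f(x+\tau y)\, g\bigl(x-(1-\tau)y\bigr) \d y .
\]

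Next I would bound the cosine from below uniformly on the domain. For $\abs{y}\leq R_0+R_1+2R$, Cauchy--Schwarz gives
\[
	\abs{2\pi\xi\cdot y}\leq 2\pi\abs{\xi}(R_0+R_1+2R)\leq \pi/3 .
\]
Hence $\cos(2\pi\xi\cdot y)\geq\cos(\pi/3)=1/2$, since cosine is even and decreasing on $[0,\pi]$. The integrand is nonnegative, so
\[
	\Re W_\tau(f,g)(x,\xi)\geq \frac12\int_{B_{R_0+R_1+2R}} f(x+\tau y)\, g\bigl(x-(1-\tau)y\bigr)\d y .
\]
Applying \autoref{lem:truncate_y} again, now with $\xi=0$, shows that the last integral equals $W_\tau(f,g)(x,0)$, which gives the claim.

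I expect no real obstacle; the argument is a direct specialization. The one point to check is integrability, so that the restriction and the real-part manipulation are legitimate. This holds because $y\mapsto f(x+\tau y)g(x-(1-\tau)y)$ is in $L^1$ by Cauchy--Schwarz, using $f,g\in L^2$ and the dilations by $\tau$ and $1-\tau$. The cosine bound itself is elementary.
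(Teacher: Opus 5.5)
Your proposal is correct and follows the route the paper intends. The paper gives no separate proof: it presents this lemma as a straightforward generalization of \autoref{lem:Re W_tau(x xi) lower bound}, using \autoref{lem:truncate_y} to restrict the $y$-integration to $B_{R_0+R_1+2R}$, after which the bound $\cos(2\pi\xi\cdot y)\geq\cos(\pi/3)=1/2$ on that ball and the nonnegativity of the integrand give the claim, exactly as you do.
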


\begin{lemma}\label{lem:two-sided W_tau(x 0) lower bound}
	Let $0<r<R$. If $x\in\setR^d$ and $\tau\in(0,1/2]$ satisfy
	\[
		\frac{r+\abs{x}}\tau \leq R-\abs{x}
	\]
	then
	\[
		W_\tau f_{r,R}(x,0)
		\geq 2^{-d/2}\omega_{d-1}\tau^{-d/2} \log\oleft(\frac{R-\abs{x}}{r+\abs{x}}\tau\right).
	\]
\end{lemma}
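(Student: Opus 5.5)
This is the two-radius analogue of \autoref{lem:W_tau(x 0) lower bound}, and I would prove it by the same pointwise argument: restrict the integral to an annulus in $y$ where both indicator functions equal one, then bound each power factor from below.

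Since $f_{r,R}\geq0$, the integrand of
\[
	W_\tau f_{r,R}(x,0)=\int_{\setR^d}\abs{x+\tau y}^{-d/2}\bm1_{[r,R]}(\abs{x+\tau y})\,\abs{x-(1-\tau)y}^{-d/2}\bm1_{[r,R]}\bigl(\abs{x-(1-\tau)y}\bigr)\d y
\]
is nonnegative. It therefore suffices to integrate over the annulus
\[
	A=\set*{y\in\setR^d}{\frac{r+\abs{x}}{\tau}\leq\abs{y}\leq R-\abs{x}},
\]
which is nonempty by hypothesis.

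For $y\in A$ we use $\tau\leq1-\tau\leq1$ and the triangle inequalities. The lower bounds are
\[
	\abs{x+\tau y}\geq\tau\abs{y}-\abs{x}\geq r,\qquad \abs{x-(1-\tau)y}\geq(1-\tau)\abs{y}-\abs{x}\geq\tau\abs{y}-\abs{x}\geq r.
\]
The upper bounds are $\abs{x+\tau y}\leq\abs{x}+\tau\abs{y}\leq\abs{x}+\abs{y}\leq R$ and $\abs{x-(1-\tau)y}\leq\abs{x}+(1-\tau)\abs{y}\leq\abs{x}+\abs{y}\leq R$. Hence both indicators equal $1$ on $A$.

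On $A$ we also have $\abs{x}\leq r+\abs{x}\leq\tau\abs{y}$, which gives the upper bounds on the power factors:
\[
	\abs{x+\tau y}\leq2\tau\abs{y},\qquad \abs{x-(1-\tau)y}\leq\abs{x}+(1-\tau)\abs{y}\leq\abs{y}.
\]
The second bound is where $r>0$ is used only through $\abs{x}\leq\tau\abs{y}$, exactly as in the case $r=1$. Consequently the integrand is at least $(2\tau)^{-d/2}\abs{y}^{-d}$ on $A$.

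Integrating in spherical coordinates then gives
\[
	W_\tau f_{r,R}(x,0)\geq(2\tau)^{-d/2}\omega_{d-1}\int_{(r+\abs{x})/\tau}^{R-\abs{x}}\rho^{-1}\d\rho=2^{-d/2}\omega_{d-1}\tau^{-d/2}\log\oleft(\frac{R-\abs{x}}{r+\abs{x}}\tau\right).
\]

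There is no real obstacle. The only care needed is to check the inner-radius condition $\geq r$ for the second factor, which relies on $1-\tau\geq\tau$, i.e.\ on the restriction $\tau\leq1/2$. Alternatively, one could reduce to \autoref{lem:W_tau(x 0) lower bound} directly by scaling, using $f_{r,R}(x)=r^{-d/2}f_{1,R/r}(x/r)$ and the identity $W_\tau(D f)(x,0)=W_\tau f(x/r,0)$ for the dilation by $1/r$. This also shows that the right-hand side is consistent under the substitution $x\mapsto x/r$, $R\mapsto R/r$.
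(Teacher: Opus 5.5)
Your proof is correct and is the argument the paper intends: the paper gives no separate proof of this lemma and presents it as a straightforward generalization of \autoref{lem:W_tau(x 0) lower bound}, whose annulus-restriction proof is exactly what you carried out with the inner radius $1$ replaced by $r$. Your alternative reduction via the isometric dilation $f_{r,R}=D_{1/r}f_{1,R/r}$ and $W_\tau f_{r,R}(x,0)=W_\tau f_{1,R/r}(x/r,0)$ is also valid.
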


The analogue of ~\autoref{cor:W_BJ lower bound} we only state for \(d=2\), since this case is the case we will need.

\begin{corollary}\label{cor: two-sided BJ lower bound}
	Let $d=2$ and $0<r<R$. If $(x,\xi)\in\setR^4$ satisfies $\abs{x}\leq \frac{R-2r}3$ and
	\(|\xi|\leq \frac{1}{24R}\) then
	\[
		W_\BJ f_{r,R}(x,\xi) \gtrsim \log\oleft(\frac12 \frac{R-\abs{x}}{r+\abs{x}}\right)^2.
	\]
\end{corollary}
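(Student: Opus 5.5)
The plan is to mirror the proof of \autoref{lem:primal BJ lower bound}, now in $d=2$ and with $f_{r,R}$ in place of $f_R$, using \autoref{lem: cross-Re W_tau(x xi) lower bound} and \autoref{lem:two-sided W_tau(x 0) lower bound} as the replacements for \autoref{lem:Re W_tau(x xi) lower bound} and \autoref{lem:W_tau(x 0) lower bound}.

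\textbf{Step 1: from $\xi$ to $\xi=0$.} Apply \autoref{lem: cross-Re W_tau(x xi) lower bound} with $f=g=f_{r,R}\geq0$ and $R_0=R_1=R$. Since $\abs{x}\leq (R-2r)/3<R$, we may take the radius of the ball containing $x$ to be $R$ as well. The admissible frequency range is then $\abs{\xi}\leq \frac{1}{6(R_0+R_1+2R)}=\frac1{24R}$, which is exactly the hypothesis. Hence
\[
	\Re W_\tau f_{r,R}(x,\xi)\geq \tfrac12 W_\tau f_{r,R}(x,0)
\]
for every $\tau\in(0,1)$. Integrating in $\tau$, and using that $W_\BJ f_{r,R}$ is real and that $W_\tau f_{r,R}(x,0)=W_{1-\tau}f_{r,R}(x,0)\geq0$ (the integrand is real and nonnegative), we get
\[
	W_\BJ f_{r,R}(x,\xi)\geq \int_0^{1/2} W_\tau f_{r,R}(x,0)\d\tau .
\]

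\textbf{Step 2: the $\tau$-integral.} Set $C=\frac{R-\abs{x}}{r+\abs{x}}$. The assumption $\abs{x}\leq (R-2r)/3$ gives $R-\abs{x}\geq 2(r+\abs{x})$, that is $C\geq2$. For $\tau\in[1/C,1/2]$ the hypothesis of \autoref{lem:two-sided W_tau(x 0) lower bound} holds, and with $d=2$ (so $2^{-d/2}\omega_1=\pi$) it yields $W_\tau f_{r,R}(x,0)\geq \pi\tau^{-1}\log(C\tau)$. Dropping the nonnegative contribution from $(0,1/C)$ and substituting $t=C\tau$,
\[
	\int_{1/C}^{1/2}\pi\tau^{-1}\log(C\tau)\d\tau=\pi\int_1^{C/2}\frac{\log t}{t}\d t=\frac\pi2\log\oleft(\frac C2\right)^2 .
\]
This gives the claim with implicit constant $\pi/4$.

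\textbf{Expected obstacle.} There is no real difficulty here. The only points needing care are checking the arithmetic that makes the constraints $\abs{x}\leq (R-2r)/3$ and $\abs{\xi}\leq 1/(24R)$ match the hypotheses of the two lemmas, and making sure the positivity of $W_\tau f_{r,R}(x,0)$ is used to discard the part of the $\tau$-range where \autoref{lem:two-sided W_tau(x 0) lower bound} does not apply.
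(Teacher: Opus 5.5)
Your proposal is correct and is the argument the paper intends: the paper gives no separate proof of this corollary, calling it a straightforward generalization of \autoref{lem:primal BJ lower bound}(i), and you obtain it exactly that way by combining \autoref{lem: cross-Re W_tau(x xi) lower bound} with \autoref{lem:two-sided W_tau(x 0) lower bound}. With $R_0=R_1=R$ the first lemma's frequency condition becomes exactly $|\xi|\le 1/(24R)$, and $|x|\le (R-2r)/3$ is exactly what gives $C\ge 2$. One harmless slip: the factor $\tfrac12$ from the cosine bound is already cancelled by the symmetry $\tau\leftrightarrow 1-\tau$ in your Step 1, so your computation actually gives the constant $\pi/2$ rather than $\pi/4$; $\pi/4$ is still valid, just weaker.
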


We can now give a direct proof of~\autoref{thm:attainment} in the case $d=2$.

\begin{proof}[Proof of \autoref{thm:attainment}, case $d=2$]
	The elementary building block of our construction is the annular profile $f_{r,R}$, which can be seen as a truncation of the scale-invariant function $\abs{x}^{-1}$ in dimension $2$. The Born--Jordan distribution of such profiles grows like $\log(R/r)^2$ near the origin, while in the same regime we have $\norm{f_{r,R}}_{L^2(\setR^2)}^2 \sim \log(R/r)$. This motivates forming a superposition like
	\[
		f = \sum_{n=1}^\infty a_n f_n, \qquad f_n = f_{r_n,R_n},
	\]
	so that the $n$-th annulus contributes at scale $r_n$ roughly like $a_n^2 \lambda_n^2$, where we set $\lambda_n=\log(R_n/r_n)$, while the total $L^2$ norm is controlled by $\sum_n a_n^2\lambda_n$. We will choose the radii such that the annuli are pairwise disjoint and shrink rapidly to the origin, allowing the $n$-th annulus to dominate $W_{\BJ}f$ on a shrinking phase-space box $B(0,r_n)\times B(0,r_n)$.

	A convenient, explicit way to implement this program is to choose $\lambda_n \coloneqq n^3+2$ and $a_n \coloneqq n^{-5/2}$, so that
	\[
		\sum_{n=1}^\infty a_n^2\lambda_n<\infty, \qquad a_n^2\lambda_n^2\to\infty.
	\]
	Moreover, set $R_1=\dfrac{1}{10}$ and define recursively $r_n \coloneqq R_n e^{-\lambda_n}$ and $R_{n+1} \coloneqq \dfrac{r_n}{8}\quad(n\ge1)$, so that
	\[
		5r_n<R_n,\qquad R_{n+1}<r_n,\qquad R_n\le \frac{1}{10}\qquad \forall n\ge1.
	\]
	In particular, the annuli $\{r_n\le \abs{x}\le R_n\}$ are pairwise disjoint and contained in $B(0,1/10)$, hence $\norm{f}_{L^2(\setR^2)}^2
		=
		2\pi \sum_{n=1}^\infty a_n^2 \lambda_n
		<
		\infty$.

	Let us now prove that $W_\BJ f$ is unbounded in every neighborhood of the origin. Fix $n\ge1$ and $(x,\xi)\in B(0, r_n)\times B(0, r_n)$. Since $\supp f\subseteq B(0,1/10)$ and $ r_n\le r_1\le 1/10$, the assumptions of \autoref{lem: cross-Re W_tau(x xi) lower bound} are satisfied by $f$ with $R_0=R_1=R=1/10$, hence
	\[
		\Re W_\tau(f,f)(x,\xi)\ge \frac12 W_\tau(f,f)(x,0)\qquad \forall \tau\in(0,1).
	\]
	Integrating in $\tau$, we obtain
	\[
		\abs{W_{\BJ}f(x,\xi)}\ge \Re W_{\BJ}f(x,\xi)\ge \frac12 W_{\BJ}f(x,0).
	\]
	At frequency $\xi=0$ all cross-terms are nonnegative since $f\ge0$, so by Tonelli's theorem we have
	\[
		W_{\BJ}f(x,0)
		=\sum_{m,k\ge1} a_ma_k\,W_{\BJ}(f_m,f_k)(x,0)
		\ge a_n^2 W_{\BJ}f_n(x,0).
	\]
	Next, since $\abs{x}\le r_n$ and $5r_n<R_n$, we have $\abs{x}\le \dfrac{R_n-2r_n}{3}$, so \autoref{cor: two-sided BJ lower bound} with $\xi=0$ implies
	\[
		W_{\BJ}f_n(x,0)\gtrsim
		\log\left(\frac12\frac{R_n-\abs{x}}{r_n+\abs{x}}\right)^2 \ge \log \left(\frac12\frac{R_n- r_n}{2r_n} \right)^2 = \log \left( \frac{e^{\lambda_n}-1}{4} \right)^2.
	\]
	Since $\lambda_n\to\infty$, it is clear that for $n$ sufficiently large (say, $n\ge N$ for a suitable $N \in \setN_+$) we have
	\[
		\log\left(\frac{e^{\lambda_n}-1}{4}\right)\ge \frac12\lambda_n,
	\]
	hence the previous arguments show that there is a constant $C>0$ such that
	\[
		\abs{W_{\BJ}f(x,\xi)}\ge C a_n^2\lambda_n^2
		\qquad
		\forall (x,\xi)\in B(0,r_n)\times B(0, r_n), \quad n\ge N.
	\]
	Since $a_n^2\lambda_n^2=n^{-5}(n^3+2)^2\to\infty$, this bound implies that $W_{\BJ}f$ is not essentially bounded in any neighborhood of the origin.

	To conclude, let $z_0=(x_0,\xi_0)$ be a Lebesgue density point of $\Omega$ and set $g=\pi(z_0)f$. Covariance yields $\abs{W_{\BJ}g(x,\xi)}=\abs{W_{\BJ}f(x-x_0,\xi-\xi_0)}$, hence
	\[
		\abs{W_{\BJ}g(x,\xi)}\ge C a_n^2\lambda_n^2
		\qquad
		\forall (x,\xi)\in E_n \coloneqq B(x_0, r_n)\times B(\xi_0, r_n), \quad n \ge N.
	\]
	Since $z_0$ is a density point, we have $\leb^4(E_n\cap\Omega)>0$ for all $n$. Therefore $g\in L^2(\setR^2)$ satisfies $\norm{W_{\BJ}g}_{L^\infty(\Omega)}=\infty$, as claimed.
\end{proof}

\begin{remark}
	It is worth noting that the mechanism behind the critical case $d=2$ is inherently $L^\infty$ in nature, as it relies on forcing controlled pointwise blow-up. As such, it cannot be transplanted to critical finite $p_*(d)=2d/(d-2)$ when $d>2$, since in this case the problem is about genuine integrability --- in particular, the same annular blocks contribute only a bounded amount to the $L^{p_*}$ norm due to critical scale invariance.
\end{remark}

\subsubsection{Supercritical case in dimension \texorpdfstring{$d>2$}{d > 2}}
\label{sec:attainment-supercritical}

For every parameter $\alpha\in(0,d/2)$, define the radial function $F_\alpha\in L^2(\setR^d)$ given by
\[
	F_\alpha(x)
	= \abs{x}^{-\alpha} \bm1_{[0,1]}(\abs{x}),
\]
whose squared $L^2$ norm is
\[
	\norm{F_\alpha}_{L^2(\setR^d)}^2
	= \int_{B(0,1)} \abs{x}^{-2\alpha} \d x
	= \int_0^1 r^{-2\alpha} \omega_{d-1} r^{d-1} \d r
	= \frac{\omega_{d-1}}{d-2\alpha}.
\]
The real part of the corresponding $\tau$-Wigner distribution is
\[
	\Re W_\tau F_\alpha(x,\xi)
	= \int_{\setR^d} \cos(2\pi\xi\cdot y)
	\abs{x+\tau y}^{-\alpha} \abs{x-(1-\tau)y}^{-\alpha}
	\bm1_{[0,1]}(\abs{x+\tau y}) \bm1_{[0,1]}(\abs{x-(1-\tau)y}) \d y.
\]

As in the proof of \autoref{lem:Re W_tau(x xi) lower bound} we have that the conditions
\begin{align*}
	\tau\abs{y}-\abs{x}     & \leq \abs{x+\tau y} \leq 1    &
	                        & \text{and}                    &
	(1-\tau)\abs{y}-\abs{x} & \leq \abs{x-(1-\tau)y} \leq 1
\end{align*}
imply
\[
	\abs{y} \leq \min\bigl\{\tau^{-1},(1-\tau)^{-1}\bigr\}(1+\abs{x}) \leq 2(1+\abs{x}).
\]
As such, when computing $\Re W_\tau F_\alpha(x,\xi)$ we can restrict the integral to the ball
$B(0,2(1+\abs{x}))$ and use the estimate
\[
	\cos(2\pi\xi\cdot y) \geq \cos\bigl(4\pi(1+\abs{x})\abs{\xi}\bigr)
\]
whenever $4\pi(1+\abs{x})\abs{\xi} \leq \pi$, leading to the lower bound
\[
	\Re W_\tau F_\alpha(x,\xi)
	\geq \cos\bigl(4\pi(1+\abs{x})\abs{\xi}\bigr) W_\tau F_\alpha(x,0).
\]
We now look for sufficient conditions on $x$ and $\tau$ such that
\[
	\abs{x-(1-\tau)y}\leq1 \implies \abs{x+\tau y}\leq1
	\qquad \forall y\in\setR^d.
\]
This means
\[
	B\left(\frac{x}{1-\tau},\frac1{1-\tau}\right)
	\subseteq
	B\left(\frac{-x}{\tau},\frac1{\tau}\right),
\]
which is ensured by
\[
	\abs*{\frac{x}{1-\tau}-\frac{-x}{\tau}} + \frac1{1-\tau} \leq \frac1{\tau}.
\]
A direct computation shows that this condition is equivalent to $\abs{x}+2\tau\leq1$. Under this assumption, we may also simplify
\[
	\bm1_{[0,1]}(\abs{x+\tau y})\bm1_{[0,1]}(\abs{x-(1-\tau)y})
	=
	\bm1_{[0,1]}(\abs{x-(1-\tau)y}).
\]

Moreover, with the substitution
\begin{align*}
	u        & = (1-\tau)y-x,             &
	\d u     & = (1-\tau)^d \d y,         &
	x+\tau y & = \frac{x+\tau u}{1-\tau},
\end{align*}
we get
\[
	\begin{split}
		W_\tau F_\alpha(x,0)
		&= \int_{\setR^d}
		\abs*{\frac{x+\tau u}{1-\tau}}^{-\alpha} \abs{u}^{-\alpha}
		\bm1_{[0,1]}(\abs{u}) (1-\tau)^{-d} \d u \\
		&= (1-\tau)^{\alpha-d}
		\int_{B(0,1)} \abs{x+\tau u}^{-\alpha} \abs{u}^{-\alpha} \d u \\
		&\geq (1-\tau)^{\alpha-d}
		\int_{B(0,1)} (\abs{x}+\tau)^{-\alpha} \abs{u}^{-\alpha} \d u \\
		&= \frac{\omega_{d-1}}{d-\alpha} (1-\tau)^{\alpha-d} (\abs{x}+\tau)^{-\alpha} \\
		&\geq \frac{\omega_{d-1}}{d-\alpha} (\abs{x}+\tau)^{-\alpha},
	\end{split}
\]
where in the last step we used $\alpha<d$ and $0<\tau<1$.

With regard to Born--Jordan, we restrict the integral to $\tau\leq(1-\abs{x})/2$ in order to use the
previous lower bound for $W_\tau F_\alpha(x,0)$ and obtain
\[
	\begin{split}
		W_\BJ F_\alpha(x,0)
		&= 2\int_0^{1/2} W_\tau F_\alpha(x,0) \d\tau
		\geq 2\int_0^{\frac{1-\abs{x}}2} W_\tau F_\alpha(x,0) \d\tau \\
		&\geq \frac{2\omega_{d-1}}{d-\alpha}
		\int_0^{\frac{1-\abs{x}}2} (\abs{x}+\tau)^{-\alpha} \d\tau \\
		&= \frac{2\omega_{d-1}}{d-\alpha}
		\left[\frac{(\abs{x}+\tau)^{1-\alpha}}{1-\alpha}\right]_0^{\frac{1-\abs{x}}2} \\
		&= \frac{2\omega_{d-1}}{(d-\alpha)(\alpha-1)}
		\left[\abs{x}^{1-\alpha}-\left(\frac{1+\abs{x}}2\right)^{1-\alpha}\right].
	\end{split}
\]

In particular, if $\alpha>1$ and $\abs{x}$ is small, then $W_\BJ F_\alpha(x,0)\gtrsim \abs{x}^{1-\alpha}$. To conclude, we need to upgrade this lower bound from the slice $\xi=0$ to a full phase-space neighborhood of the origin. To this aim, choose $\rho,\sigma>0$ so small that
\[
	4\pi(1+\abs{x})\abs{\xi}\le \frac{\pi}{3}
	\qquad
	\forall \abs{x}\le \rho,\  \abs{\xi}\le \sigma.
\]
Then the previous estimate for $\Re W_\tau F_\alpha(x,\xi)$ yields
\[
	\Re W_\tau F_\alpha(x,\xi)\ge \frac12 W_\tau F_\alpha(x,0)
	\qquad
	\forall \abs{x}\le \rho,\  \abs{\xi}\le \sigma,\ \tau\in(0,1).
\]
Since $W_\BJ F_\alpha$ is real-valued for $d>1$, integrating in $\tau$ gives
\[
	\begin{split}
		W_\BJ F_\alpha(x,\xi)
		&= \Re W_\BJ F_\alpha(x,\xi)
		= \int_0^1 \Re W_\tau F_\alpha(x,\xi)\,\d\tau \\
		&\ge \frac12 \int_0^1 W_\tau F_\alpha(x,0)\,\d\tau
		= \frac12 W_\BJ F_\alpha(x,0)
	\end{split}
\]
for all $\abs{x}\le \rho$ and $\abs{\xi}\le \sigma$. Consequently, we infer
\[
	W_\BJ F_\alpha(x,\xi)\gtrsim \abs{x}^{1-\alpha}
	\qquad
	\forall x\in B(0,\rho)\setminus\{0\},\ \xi\in B(0,\sigma).
\]

In the case where  $p\in\bigl(p_*(d),\infty\bigr)$ it is enough (and possible) to choose $\alpha$ so that
\[
	1+\frac{d}{p}<\alpha<\frac d2.
\]
Then $p(1-\alpha)<-d$, so $\abs{x}^{1-\alpha}\notin L^p_{\mathrm{loc}}(\setR^d)$, and therefore $W_\BJ F_\alpha\notin L^p_{\mathrm{loc}}(\setR^{2d})$, as claimed.

If $p=\infty$, choose any
\[
	1<\alpha<\frac d2,
\]
which is possible because $d>2$. Then $\abs{x}^{1-\alpha}\to\infty$ as $x\to0$, and the previous bound shows that $W_\BJ F_\alpha$ is unbounded on every neighborhood of the origin. As a consequence, $W_\BJ F_\alpha\notin L^\infty_{\mathrm{loc}}(\setR^{2d})$.

This yields a completely explicit local counterexample for every $p\in\bigl(p_*(d),\infty\bigr]$.
The full statement of \autoref{thm:attainment} on an arbitrary measurable set $\Omega$ follows,
after picking a Lebesgue point of $\Omega$, from the usual covariance argument.

\begin{appendices}
	\section{Incomplete Born--Jordan distributions}

	It is intuitively clear at this point that the endpoint singularities at $\tau=0$ and $\tau=1$ are the ultimate source of the pathologies of the full Born--Jordan distribution in dimension $d\ge 2$. It is therefore natural to isolate and study suitably truncated averages:
	\begin{definition}
		Given $\delta\in[0,1/2]$, the \emph{incomplete Born--Jordan distribution} of $f \in L^2(\setR^d)$ is
		\begin{equation}\label{eq:BJ-delta}
			W_\BJ^\delta f(z) \coloneqq \int_\delta^{1-\delta} W_\tau f(z) \d\tau.
		\end{equation}
	\end{definition} It is clear that $W_\BJ f = W_\BJ^0 f$. More interestingly, a local incompleteness property for annular supports holds: we show below that if $f$ is supported on an annulus, then $W_\BJ f=W_{\BJ}^\delta f$ near the origin for some $\delta$. This property, already used implicitly in the main text, comes with additional benefits. For instance, it unlocks the (local) continuity of $W_\BJ$ --- this is, to the best of our knowledge, the first continuity result for $W_\BJ$ when $d\geq 2$. In turn, this regularity can be leveraged to provide an alternative proof of the unboundedness of the concentration problem~\eqref{eq:concentration problem} for $p=\infty$.

	We start by giving the precise statement of the incompleteness property.

	\begin{lemma}\label{lem:W-tau-support}
		Let $f\in L^2(\setR^d)$ be such that
		\[
			\supp f \subseteq \set{x\in\setR^d}{r_1\leq\abs{x}\leq r_2}
		\]
		for some radii $0<r_1<r_2$.
		If $\delta\in[0,1/2]$ and $\rho>0$ satisfy
		\begin{equation}\label{eq:annulus-condition}
			(r_1+r_2)\delta + \rho \leq r_1
		\end{equation}
		then
		\begin{align}
			W_\tau f(z)
			 & = 0                                                                                              &
			 & \forall z = (x,\xi) \in B(0,\rho)\times\setR^d, \quad \forall \tau\in[0,\delta]\cup[1-\delta,1],
		\end{align}
		and, as a consequence, we have
		\begin{align}
			W_\BJ f(z) & = W_\BJ^\delta f(z)
			           & \forall z = (x,\xi) \in B(0,\rho)\times\setR^d.
		\end{align}
	\end{lemma}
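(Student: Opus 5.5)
Fix $\tau\in[0,\delta]$ and $x\in B(0,\rho)$. We will show that for every $y\in\setR^d$ the product $f(x+\tau y)\overline{f(x-(1-\tau)y)}$ vanishes. Then the defining integral of $W_\tau f(x,\xi)$ is zero for every $\xi$. (For $\tau=0$ the formula is understood as the same integral; it vanishes by the same pointwise argument.) The range $\tau\in[1-\delta,1]$ will follow from the symmetry $\overline{W_\tau f}=W_{1-\tau}f$ stated in the notation section, or equally by repeating the argument with the roles of the two factors exchanged.

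\textbf{Key pointwise step.} Suppose both factors are nonzero, i.e.\ $x+\tau y$ and $x-(1-\tau)y$ both lie in the annulus $\{r_1\le\abs{\cdot}\le r_2\}$. From $\abs{x-(1-\tau)y}\le r_2$ and $\tau\le\delta\le 1/2$ we get
\[
\abs{y}\le \frac{r_2+\abs{x}}{1-\tau}.
\]
Then
\[
\abs{x+\tau y}\le \abs{x}+\frac{\tau}{1-\tau}(r_2+\abs{x}) = \frac{\abs{x}+\tau r_2}{1-\tau}.
\]
We want this to be $<r_1$, which is equivalent to $\abs{x}+\tau r_2<(1-\tau)r_1$, i.e.\ $\abs{x}+\tau(r_1+r_2)<r_1$. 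This follows from $\abs{x}<\rho$, $\tau\le\delta$ and \eqref{eq:annulus-condition}. So $\abs{x+\tau y}<r_1$, a contradiction. The only delicate point is the boundary: we need a strict inequality somewhere. It comes from the open ball $B(0,\rho)$. If the ball is meant to be closed, the equality case is a null set in $y$ (a sphere or a point), which does not affect the integral. This is where I would be careful, but the integral identity is insensitive to measure-zero sets. Since $f$ is only an $L^2$ class, "support" is understood up to null sets, and the integrand vanishes for a.e.\ $y$.

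\textbf{Conclusion.} With $W_\tau f(z)=0$ on $B(0,\rho)\times\setR^d$ for $\tau\in[0,\delta]\cup[1-\delta,1]$, split
\[
\int_0^1=\int_0^\delta+\int_\delta^{1-\delta}+\int_{1-\delta}^1 .
\]
The two endpoint pieces vanish identically, so $W_\BJ f(z)=W_\BJ^\delta f(z)$ there. The main (mild) obstacle is simply getting the triangle-inequality chain in the right direction, i.e.\ bounding $\abs{y}$ via the factor with the larger coefficient $1-\tau$. Everything else is bookkeeping.
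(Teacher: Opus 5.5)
Your proof is correct and is essentially the paper's argument. Both combine the reverse triangle inequality on the outer bound $\abs{x-(1-\tau)y}\le r_2$ with the inner bound $\abs{x+\tau y}\ge r_1$ to contradict $(r_1+r_2)\delta+\rho\le r_1$, and both handle $[1-\delta,1]$ via $W_{1-\tau}f=\overline{W_\tau f}$; the paper weights the two inequalities by $1-\tau$ and $\tau$ to obtain $r_1<(r_1+r_2)\tau+\rho$, hence $\tau>\delta$, whereas you solve for $\abs{y}$ and substitute. The closed-ball digression is unnecessary: $B(0,\rho)$ is open, so your strict inequality already comes from $\abs{x}<\rho$.
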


	\begin{proof}
		Suppose that $W_\tau f(x,\xi)\neq0$ for some $x\in B(0,\rho)$.
		Then there must exist $y\in\setR^d$ such that both factors $f\bigl(x+\tau y\bigr)$ and
		$f\bigl(x-(1-\tau)y\bigr)$ are nonzero, which requires
		\begin{align*}
			r_1 & \leq \abs{x+\tau y} \leq r_2     &
			    & \text{and}                       &
			r_1 & \leq \abs{x-(1-\tau)y} \leq r_2.
		\end{align*}
		Applying the triangle inequality to the first and last inequalities leads to
		\begin{gather*}
			r_1 \leq \abs{x+\tau y} \leq \abs{x} + \tau\abs{y} < \rho + \tau\abs{y}, \\
			-\rho + (1-\tau)\abs{y} < -\abs{x} + (1-\tau)\abs{y} \leq \abs{x-(1-\tau)y} \leq r_2.
		\end{gather*}
		Chaining the first inequality multiplied by $1-\tau$ and the second by $\tau$ results in%
		\footnote{At least one of the two strict inequalities survives because $\tau$ and $1-\tau$
			cannot be both zero.}
		\[
			(1-\tau)(r_1-\rho) < \tau(r_2+\rho),
		\]
		which is equivalent to
		\[
			r_1 < (r_1+r_2)\tau + \rho
		\]
		and together with \eqref{eq:annulus-condition} implies $\tau>\delta$. The fact that
		$\tau$ must also be smaller than $1-\delta$ follows from a similar argument involving
		the second and third inequalities, or from the identity $W_{1-\tau}f(z)=\overline{W_\tau f(z)}$.
	\end{proof}

	We are now ready to prove that incomplete Born–Jordan averages are globally regular.
	\begin{proposition}
		\label{prop:BJ-delta-continuity}
		Let $\delta\in(0,1/2]$ and $f\in L^2(\setR^d)$. Then $W_\BJ^\delta f$ is continuous.
	\end{proposition}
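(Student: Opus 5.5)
The plan is to show that, for each fixed $\tau\in[\delta,1-\delta]$, the map $z\mapsto W_\tau f(z)$ is continuous, bounded uniformly in $\tau$, and depends continuously on $\tau$ in a sense strong enough to pass under the integral. The natural route is a density argument. Fix $f\in L^2(\setR^d)$ and choose Schwartz functions $f_k\to f$ in $L^2(\setR^d)$. For Schwartz $f_k$ the integrand $(z,\tau)\mapsto e^{-2\pi i\xi\cdot y}f_k(x+\tau y)\overline{f_k(x-(1-\tau)y)}$ is jointly continuous. It is also dominated, locally uniformly in $(x,\tau)\in K\times[\delta,1-\delta]$, by an integrable function of $y$. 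Indeed, by Peetre's inequality, $\abs{f_k(x+\tau y)}\lesssim_N \langle x\rangle^N\langle \tau y\rangle^{-N}\le \langle x\rangle^N\delta^{-N}\langle y\rangle^{-N}$, which uses $\tau\ge\delta$. Dominated convergence then gives continuity of $W_\tau f_k(z)$ jointly in $(z,\tau)$ on $\setR^{2d}\times[\delta,1-\delta]$, hence continuity of $W_\BJ^\delta f_k$.

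Next comes uniform convergence. By sesquilinearity,
\[
	W_\tau f-W_\tau f_k = W_\tau(f-f_k,f)+W_\tau(f_k,f-f_k),
\]
and the Cauchy--Schwarz estimate recalled in the proof of \autoref{thm:subcritical} gives
\[
	\norm{W_\tau(g,h)}_{L^\infty(\setR^{2d})}\le (\tau(1-\tau))^{-d/2}\norm{g}_{L^2}\norm{h}_{L^2},
\]
which follows from the change of variables $u=x+\tau y$, $v=x-(1-\tau)y$ in $y$ separately. Integrating over $\tau\in[\delta,1-\delta]$ yields
\[
	\norm{W_\BJ^\delta f-W_\BJ^\delta f_k}_{L^\infty}\le \Bigl(\int_\delta^{1-\delta}(\tau(1-\tau))^{-d/2}\d\tau\Bigr)\bigl(\norm{f}_{L^2}+\norm{f_k}_{L^2}\bigr)\norm{f-f_k}_{L^2}.
\]
The prefactor is finite precisely because $\delta>0$. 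Hence $W_\BJ^\delta f$ is a uniform limit of continuous functions, and is therefore continuous. As a by-product it is bounded, and it lies in $C_0$ if one also notes that $W_\tau f_k\in C_0$.

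The only genuinely delicate point is the first step: justifying joint continuity in $(z,\tau)$ for the approximants, so that $\int_\delta^{1-\delta}$ preserves continuity. If the domination feels clumsy, there is a cleaner alternative. Take $f_k\in C_c(\setR^d)$; then the integrand is jointly continuous with $y$ confined to a compact set, uniformly for $\tau\ge\delta$ and $x$ in a compact set, by the argument of \autoref{lem:truncate_y}. Continuity then follows immediately. Alternatively, one can cite that each $W_\tau f$ is continuous and avoid joint continuity altogether. Since the family $\{W_\tau f\}_{\tau\in[\delta,1-\delta]}$ is uniformly bounded, dominated convergence applied to $\int_\delta^{1-\delta}W_\tau f(z_n)\d\tau$ gives continuity of $W_\BJ^\delta f$ directly from pointwise continuity of each $W_\tau f$. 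I would in fact present this last argument as the main proof, since it needs only the known fact $W_\tau f\in C_0(\setR^{2d})$ and the uniform bound $(\tau(1-\tau))^{-d/2}\norm{f}_{L^2}^2\le \delta^{-d}\norm{f}_{L^2}^2$.
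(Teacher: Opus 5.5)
Your proposal is correct. The argument you choose as the main proof is exactly the paper's: continuity of each $W_\tau f$ for fixed $\tau$, the uniform bound $|W_\tau f(z)|\le(\tau(1-\tau))^{-d/2}\|f\|_{L^2}^2\le\delta^{-d}\|f\|_{L^2}^2$ on $[\delta,1-\delta]$, and dominated convergence in $\tau$. The density/uniform-approximation route you sketch first is also valid and self-contained, and as a bonus shows that $\tau\mapsto W_\tau f(z)$ is continuous, so the $\tau$-integral is unambiguously defined, but it is not needed.
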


	\begin{proof}
		By \cite[Proposition~3.9]{CT20}, the \(\tau\)-Wigner distributions are continuous functions of \(z\) for any \(\tau\in (0,1)\) and any \(f\in L^2(\setR^d)\).
		As such, for any fixed \(\tau\) we have \(\lim_{z\to z_0} W_{\tau}f(z)=W_\tau f(z_0)\).
		By the pointwise estimate \cite[Proposition~6.4]{Boggiatto-deDonno-Oliaro-2010}
		\[
			|W_\tau f(z)|\leq \|W_\tau f\|_{L^\infty(\setR^{2d})}\leq \frac{1}{(\tau(1-\tau))^{d/2}}\|f\|_{L^2(\setR^d)}^2
		\]
		and the fact that \(\frac{1}{(\tau(1-\tau))^{d/2}}\) is integrable on \([\delta,1-\delta]\) for any
		\(\delta\in (0,1/2]\) the claim follows by dominated convergence.
	\end{proof}

	The incompleteness property stated in \autoref{lem:W-tau-support}, combined with the continuity of
	$W_\BJ^\delta$ stated in \autoref{prop:BJ-delta-continuity}, leads directly to the continuity of
	$W_\BJ$ near the origin.

	\begin{corollary}
		Let $f\in L^2(\setR^d)$ be such that $\supp f \subseteq \set{x\in\setR^d}{r_1\leq\abs{x}\leq r_2}$
		for some radii $0<r_1<r_2$. If $\delta\in[0,1/2]$ and $\rho>0$ satisfy \eqref{eq:annulus-condition}
		then $W_\BJ f$ is continuous in $B(0,\rho)\times\setR^d$.
	\end{corollary}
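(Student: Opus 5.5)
The corollary is essentially immediate from \autoref{lem:W-tau-support} together with \autoref{prop:BJ-delta-continuity}, so the plan is simply to chain them, taking care of the degenerate value $\delta=0$.

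First, I will make sure that a strictly positive truncation parameter is available. The hypothesis \eqref{eq:annulus-condition} with $\rho>0$ forces $\rho\le r_1$. If the given $\delta$ is already positive we keep it. If $\delta=0$, I will replace the pair $(\delta,\rho)$ by a slightly smaller radius $\rho'<\rho$ and a small $\delta'>0$ with $(r_1+r_2)\delta'+\rho'\le r_1$; this is possible since $\rho'<r_1$. Since continuity is a local property and $B(0,\rho)\times\setR^d=\bigcup_{\rho'<\rho}B(0,\rho')\times\setR^d$, it suffices to prove continuity on each $B(0,\rho')\times\setR^d$. Without loss of generality we may therefore assume $\delta\in(0,1/2]$.

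Next, \autoref{lem:W-tau-support} gives the identity $W_\BJ f(z)=W_\BJ^\delta f(z)$ for every $z\in B(0,\rho)\times\setR^d$. This is a genuine pointwise identity: the $\tau$-integrand vanishes identically for $\tau\in[0,\delta]\cup[1-\delta,1]$, so the integral defining $W_\BJ f$ at such $z$ reduces to $\int_\delta^{1-\delta}W_\tau f(z)\d\tau$. In particular the integral is absolutely convergent there, thanks to the bound $\abs{W_\tau f}\le(\tau(1-\tau))^{-d/2}\norm{f}_{L^2}^2$.

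Finally, \autoref{prop:BJ-delta-continuity} asserts that $W_\BJ^\delta f$ is continuous on all of $\setR^{2d}$ when $\delta>0$. Its restriction to the open set $B(0,\rho)\times\setR^d$ coincides with $W_\BJ f$, so $W_\BJ f$ is continuous there.

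The only mildly delicate point is the case $\delta=0$ in the hypothesis, handled by the shrinking argument above. Everything else is a direct citation of the two preceding results, so I expect no real obstacle.
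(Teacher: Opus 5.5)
Your proposal is correct and follows the paper's argument exactly: the corollary comes from combining \autoref{lem:W-tau-support}, which identifies $W_\BJ f$ with $W_\BJ^\delta f$ on $B(0,\rho)\times\setR^d$, with \autoref{prop:BJ-delta-continuity}. Your shrinking step for $\delta=0$ is a useful addition, since \autoref{prop:BJ-delta-continuity} is only stated for $\delta>0$ and the paper does not address this endpoint case; taking $\rho'<\rho\le r_1$ and $\delta'=(r_1-\rho')/(r_1+r_2)\in(0,1/2)$ does the job.
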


	The continuity of $W_\BJ$ near the origin can be exploited to provide an alternative solution to the
	concentration problem~\eqref{eq:concentration problem} in the case $p=\infty$. The reason is that,
	thanks to the continuity, it is sufficient to show that $\abs{W_\BJ f(0,0)}$ diverges sufficiently
	fast, instead of proving that the estimates of \autoref{cor:W_BJ lower bound} hold uniformly in the
	product of suitable balls.

	\begin{proof}
		[Alternative proof of \autoref{thm:critical d=2}, and \autoref{thm:supercritical d>2} for $p=\infty$]
		We will consider the functions $f_R$ as before. Since $\supp f_R\subseteq\mathcal{A}_{1,R}$, we can apply \autoref{lem:W-tau-support}
		with $r_1=1,r_2=R,\delta=\frac1{2(R+1)},\rho=1/2$ and obtain that $W_\BJ f_R=W_\BJ^\delta f_R$
		in $B(0,\rho)\times\setR^d$; applying then \autoref{prop:BJ-delta-continuity} we deduce that
		$W_\BJ f_R$ is continuous in this very open set. By continuity and covariance, unboundedness will now follow if we can show that \(W_\BJ f_R(0,0)\) grows faster than \(\|f_R\|_{L^2(\setR^d)}^2=\omega_{d-1}\log(R)\) as \(R\to \infty.\) We will prove this by direct computation.
		Let \(\tau \in (0,1/2]\). By definition
		\[
			\begin{split}
				W_\tau f_R(0,0)
				&= \int_{\setR^d} f_R\bigl(\tau y\bigr) \overline{f_R\bigl(-(1-\tau)y\bigr)} \d y \\
				&= \int_{\setR^d} \abs{\tau y}^{-d/2} \bm1_{[1,R]}(\abs{\tau y})
				\abs{(1-\tau)y}^{-d/2} \bm1_{[1,R]}\bigl(\abs{(1-\tau)y}\bigr) \d y.
			\end{split}
		\]
		As usual, the indicator functions are both non-zero when
		$\frac1\tau\leq \abs{y} \leq \frac{R}\tau$ and  $\frac1{1-\tau} \leq \abs{y} \leq \frac{R}{1-\tau}$,
		which is equivalent to
		\[
			\frac1\tau \leq \abs{y} \leq \frac{R}{1-\tau}.
		\]
		Therefore
		\[
			\begin{split}
				W_\tau f_R(0,0)
				&= \bigl(\tau(1-\tau)\bigr)^{-d/2}
				\int_{\setR^d} \bm1_{\left[\frac1\tau, \frac{R}{1-\tau}\right]}(\abs{y}) \abs{y}^{-d} \d y\\
				&=\begin{cases}
					\omega_{d-1} \bigl(\tau(1-\tau)\bigr)^{-d/2}\log(R\tau/(1-\tau)) & (\tau >\frac{1}{R+1})      \\
					0                                                                & (\tau \leq \frac{1}{R+1}).
				\end{cases}
			\end{split}
		\]
		We may now compute the Born--Jordan distribution at \(0\), but we must consider \(d=2\) and \(d>2\) separately. For \(d=2\) we have, using the change of variables \(u=\tau/(1-\tau)\)
		\[
			W_{\BJ}f_R(0,0)
			= 2\omega_{1}\int_{1/(R+1)}^{1/2}\frac{\log(R\tau/(1-\tau))}{\bigl(\tau(1-\tau)\bigr)}\d\tau
			= 4\pi \int_{1/R}^{1}\frac{\log(Ru)}{u}\d u
			= 2\pi (\log R)^2.
		\]
		The unboundedness of the supremum clearly follows. For \(d>2\) we use the same change of variables, plus integration by parts:
		\[
			\begin{split}
				W_{\BJ}f_R(0,0)
				&= 2\omega_{d-1}\int_{1/(R+1)}^{1/2}\frac{\log(R\tau/(1-\tau))}{\bigl(\tau(1-\tau)\bigr)^{d/2}}\d\tau
				= 2\omega_{d-1} \int_{1/R}^{1}\frac{\log(Ru)}{u^{d/2}}(u+1)^{d-2}\d u \\
				&\geq 2\omega_{d-1} \int_{1/R}^{1}\frac{\log(Ru)}{u^{d/2}}\d u                                                                                                           \\
				&=2\omega_{d-1}\left[\frac{\log(Ru)}{(1-d/2)u^{d/2-1}}-\frac{1}{(1-d/2)^2u^{d/2-1}}\right]_{1/R}^1                                                                       \\
				&=2\omega_{d-1}\left(\frac{\log(R)}{1-d/2}-\frac{1}{(d/2-1)^2}+\frac{R^{d/2-1}}{(d/2-1)^2}\right)                                                                        \\
				&\gtrsim R^{d/2-1}. \qedhere
			\end{split}
		\]
	\end{proof}

	To further support the perspective claimed at the beginning of this appendix, we record that the $L^p$ concentration problem for the incomplete Born--Jordan distributions $W_{\BJ}^\delta$ with $0<\delta\le1/2$ always has subcritical nature: the concentration functional $f \mapsto \norm{W_{\BJ}^\delta f}_{L^p(\Omega)}$ is sequentially weakly continuous for every $1 \le p \le \infty$. In fact, the same is true even for gentler truncations, for instance, after replacing $\bm1_{[\delta,1-\delta]}$ with a suitable weight $w \in L^1(0,1)$ in \eqref{eq:BJ-delta} --- see \cite{cuong} in this connection. Moreover, one can easily obtain sharp asymptotics for the optimal concentration value as $\delta \downarrow 0$ in dimension $d=2$ and critical regime $p_*(2)=\infty$.

	\begin{proposition}\label{prop:d2-delta-asymptotic}
		Let $d=2$ and let $\Omega\subseteq\setR^2\times\setR^2$ be measurable with
		$0<\leb^4(\Omega)<\infty$. For $\delta\in(0,1/2]$, define
		\[
			L_{\infty,\Omega}^\delta
			\coloneqq
			\sup_{\norm{f}_{L^2(\setR^2)}=1}
			\norm{W_\BJ^\delta f}_{L^\infty(\Omega)}.
		\]
		Then $L_{\infty,\Omega}^\delta \asymp \log(1/\delta)$ as $\delta \downarrow 0$.
	\end{proposition}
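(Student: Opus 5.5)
We prove the upper bound $L^\delta_{\infty,\Omega}\lesssim\log(1/\delta)$ and the matching lower bound separately.

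\textbf{Upper bound.} We use the pointwise Cauchy--Schwarz estimate $\norm{W_\tau f}_{L^\infty(\setR^{4})}\le(\tau(1-\tau))^{-1}\norm{f}_{L^2}^2$ already quoted in the proof of \autoref{prop:BJ-delta-continuity}. Integrating over $[\delta,1-\delta]$ gives
\[
\norm{W_\BJ^\delta f}_{L^\infty(\Omega)}\le\int_\delta^{1-\delta}\frac{\d\tau}{\tau(1-\tau)}\,\norm{f}_{L^2}^2=2\log\frac{1-\delta}{\delta}\,\norm{f}_{L^2}^2\lesssim\log(1/\delta).
\]

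\textbf{Lower bound.} We test with $f_R$ for the scale $R\asymp1/\delta$. The computation in the alternative proof above gives $W_\tau f_R(0,0)=2\pi(\tau(1-\tau))^{-1}\log(R\tau/(1-\tau))$ for $\tau>1/(R+1)$ (in $d=2$, $\omega_1=2\pi$), and $W_\tau f_R(0,0)=0$ otherwise. Take $R=1/\delta$ and substitute $u=\tau/(1-\tau)$. Then $W_\BJ^\delta f_R(0,0)=4\pi\int_{\delta/(1-\delta)}^{1}\log(Ru)\,\frac{\d u}{u}$. Since the lower limit is about $1/R$, this integral is $\gtrsim(\log R)^2$. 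Dividing by $\norm{f_R}_{L^2}^2=2\pi\log R$ yields a ratio $\gtrsim\log(1/\delta)$ at the single point $(0,0)$.

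The main step is transferring this pointwise value to an $L^\infty(\Omega)$ bound. Here we use continuity rather than uniform estimates on shrinking boxes. By \autoref{prop:BJ-delta-continuity}, $W_\BJ^\delta f_R$ is continuous everywhere, with no annulus condition needed. Alternatively, we can apply \autoref{lem:primal BJ lower bound}\,(i) together with the analogous nonnegativity of $\Re W_\tau$ for $|\xi|\lesssim1/R$, restricted to $\tau\ge\delta$; as in that proof, this gives $W_\BJ^\delta f_R\gtrsim(\log R)^2$ on $B(0,1)\times B(0,1/(9R))$. In that lemma the lower bound already came only from $\tau\in[1/C,1/2]$, with $C\asymp R$, so truncating at $\delta\asymp1/R$ loses at most a constant fraction. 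To be safe, choose $R=c/\delta$ with $c$ small so that $1/C\ge\delta$.

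Finally, we localize to $\Omega$. Take $(x_0,\xi_0)$ from \autoref{prop:Lebesgue differentiation} and set $g=\pi(x_0,\xi_0)f_R$. Covariance of each $W_\tau$, and hence of $W_\BJ^\delta$, places the box $B(x_0,1)\times B(\xi_0,1/(9R))$ so that it meets $\Omega$ in positive measure. This gives $\norm{W_\BJ^\delta g}_{L^\infty(\Omega)}/\norm{g}_{L^2}^2\gtrsim(\log R)^2/\log R\asymp\log(1/\delta)$.
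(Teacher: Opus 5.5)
Your proof is correct. The upper bound coincides with the paper's, but you organize the lower bound differently.

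The paper takes $R_\delta=1/\delta-2$ precisely so that \autoref{lem:W-tau-support} (with $\rho=\delta/2$) makes the truncation invisible near the origin. This gives the exact value $W_\BJ^\delta f_{R_\delta}(0,0)=W_\BJ f_{R_\delta}(0,0)=2\pi(\log R_\delta)^2$. The paper then localizes using only the continuity of $W_\BJ^\delta$ (\autoref{prop:BJ-delta-continuity}) at a Lebesgue density point of $\Omega$.

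You take $R=1/\delta$ and evaluate the truncated integral at the origin directly. This only subtracts the bounded term $2\pi\bigl(\log\frac{1}{1-\delta}\bigr)^2$ and avoids the incompleteness lemma. Your final paragraph, however, really localizes via the uniform box bound, i.e.\ the proof of \autoref{lem:primal BJ lower bound}(i) run over $\tau\in[\delta,1/2]$. That is legitimate because $C=\frac{R-\abs{x}}{1+\abs{x}}\le R$, so $1/C\ge 1/R\ge\delta$ whenever $R\le 1/\delta$. In particular any $c\le 1$ works; no smallness is needed.

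The box route is the one the main text uses for \autoref{thm:critical d=2}. It gives a quantified box, needs no continuity at all, and makes your pointwise computation at $(0,0)$ redundant. The paper's route instead trades the uniform estimate for one exact value plus qualitative continuity.

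Keep the two localizations separate, though. Continuity alone yields an unquantified neighborhood of the origin, which need not contain $B(0,1)\times B(0,1/(9R))$. Along that route you must localize at a genuine Lebesgue density point, as the paper does. \autoref{prop:Lebesgue differentiation}, with its fixed $x$-radius, is the tool matched to the box bound.
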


	\begin{proof}
		The pointwise estimate for $W_\tau$ yields
		\[
			\norm{W_\BJ^\delta f}_{L^\infty(\Omega)}
			\le \norm{f}_{L^2}^2 \int_\delta^{1-\delta} (\tau(1-\tau))^{-1}\,\d\tau	\lesssim \log(1/\delta)\norm{f}_{L^2}^2,
		\]
		hence $L_{\infty,\Omega}^\delta\lesssim \log(1/\delta)$.
		For the companion lower bound, choose $R_\delta = \dfrac{1}{\delta}-2$ with $\delta \le 1/3$ to ensure $R_\delta\ge 1$.
		Then $(1+R_\delta)\delta = 1-\delta$, so by \autoref{lem:W-tau-support} with $\rho=\delta/2>0$ we have $W_\BJ^\delta f_{R_\delta}(0,0)=W_\BJ f_{R_\delta}(0,0)$.
		Arguing as in the previous alternative proof gives
		\[
			W_\BJ f_{R_\delta}(0,0)=2\pi(\log R_\delta)^2,
			\qquad
			\norm{f_{R_\delta}}_{L^2(\setR^2)}^2=2\pi (\log R_\delta),
		\]
		therefore
		\[
			\frac{W_\BJ^\delta f_{R_\delta}(0,0)}{\norm{f_{R_\delta}}_{L^2}^2}
			= \log R_\delta \asymp \log(1/\delta).
		\]
		Since $W_\BJ^\delta$ is continuous by \autoref{prop:BJ-delta-continuity}, if $z_0$ is a Lebesgue density point of $\Omega$ then the shifted function
		$\pi(z_0)f_{R_\delta}$ satisfies
		\[
			\norm{W_\BJ^\delta(\pi(z_0)f_{R_\delta})}_{L^\infty(\Omega)}
			\gtrsim \log(1/\delta)\norm{f_{R_\delta}}_{L^2}^2,
		\]
		which proves the matching lower bound for $L_{\infty,\Omega}^\delta$.
	\end{proof}

	\begin{remark} Similar arguments leveraging \autoref{cor:W_BJ lower bound} show that, again for $d=2$, the Born--Jordan concentration value $L_{p,\Omega}\coloneqq \sup_{\norm{f}_{L^2}=1}\norm{W_\BJ f}_{L^p(\Omega)}$ satisfies $L_{p,\Omega} \asymp p$ as $p \uparrow \infty$.
	\end{remark}

	\section{Born--Jordan quantization of local Lebesgue symbols}

	Let us briefly examine some consequences of our main results on Born--Jordan
	pseudodifferential operators. Recall the standard weak correspondence between time-frequency representations and quantization rules: the
	Born--Jordan operator with symbol \(a \in \mathcal{S}'(\setR^{2d})\) is defined by
	\[
		\angles[\big]{\Op_\BJ(a)f,g} = \angles[\big]{a,W_\BJ(g,f)}, \qquad \forall\ f,g \in \mathcal{S}(\setR^{d}).
	\]
	Since
	\[
		W_\BJ(g,f)=\int_0^1 W_\tau(g,f) \d\tau ,
	\]
	this is precisely the operator \(T_Q^a\) studied in \cite[Section~6]{Boggiatto-deDonno-Oliaro-2010}.
	Born--Jordan operators are a priori only well-defined operators from \(\mathcal{S}(\setR^d)\) to \(\mathcal{S}'(\setR^d)\), but the subcritical concentration scenario immediately yields $L^2$-boundedness results for such operators for symbols with local Lebesgue summability.
	To be precise, let
	\(\Omega\subset\setR^{2d}\) be measurable with \(0<\leb^{2d}(\Omega)<\infty\), let
	\(1\le p<p_*(d)\) and set \(q=p'\). If \(a\in L^q(\Omega)\) and \(\supp a\subseteq\Omega\), then
	\[
		\begin{split}
			\abs{\angles[\big]{\Op_\BJ(a)f,g}}
			&= \abs{\angles[\big]{a,W_\BJ(g,f)}} \\
			&\le \norm{a}_{L^q(\Omega)} \norm{W_\BJ(g,f)}_{L^p(\Omega)} \\
			&\le C_{p,\Omega}\norm{a}_{L^q(\Omega)} \norm{f}_{L^2(\setR^d)}\norm{g}_{L^2(\setR^d)},
		\end{split}
	\] for some $C_{p,\Omega}>0$. As a consequence, \(\Op_\BJ(a)\) extends to a bounded operator on \(L^2(\setR^d)\) with
	\[
		\norm{\Op_\BJ(a)}_{L^2 \to L^2} \le C_{p,\Omega}\norm{a}_{L^q(\Omega)}, \qquad q> \begin{cases} \dfrac{2d}{d+2} & (d>2) \\ 1 & (d=2). \end{cases}
	\]
	This result is already contained in \cite[Theorem~6.10]{Boggiatto-deDonno-Oliaro-2010}. Indeed, in the case \(p=2\) this reads
	\[
		a\in L^q(\setR^{2d}) \longmapsto T_Q^a\in \mathcal{B}(L^2(\setR^d)), \qquad \frac{2d}{d+2}<q\le 2.
	\]
	For symbols supported in a finite-measure set, the case \(q>2\) also follows from their \(q=2\) result and the embedding \(L^q(\Omega)\subset L^2(\Omega)\).

	On the other hand, these sufficient conditions seem to lack a companion negative counterpart. We will now show that the complementary failure below the critical threshold actually follows by the results in this note. Assume indeed \(q<2d/(d+2)\), so that \(q'>p_*(d)\). If a uniform estimate
	\[
		\abs{\angles[\big]{\Op_\BJ(a)f,g}} \le C\norm{a}_{L^q(\Omega)} \norm{f}_{L^2(\setR^d)} \norm{g}_{L^2(\setR^d)}
	\]
	held for all \(a\in L^q(\Omega)\), then taking \(g=f\) would imply by duality
	\[
		\norm{W_\BJ f}_{L^{q'}(\Omega)} \le C\norm{f}_{L^2(\setR^d)}^2 \qquad \forall\ f\in L^2(\setR^d).
	\]
	Such a claim clearly contrasts the supercritical counterexamples of \autoref{thm:supercritical d>2} and
	\autoref{thm:attainment}, hence no such local \(L^q\)-symbol estimate can hold below the threshold. In dimension \(d=2\), the endpoint \(q=1\) is similarly ruled out. Indeed, \(q'=\infty\), and
	\autoref{thm:critical d=2} together with \autoref{thm:attainment} gives functions \(f\in L^2(\setR^2)\) with \(W_\BJ f\notin L^\infty(\Omega)\).

	The endpoint \(q=2d/(d+2)\) in dimension \(d>2\) remains open, being equivalent by duality to the open critical concentration problem at \(p=p_*(d)\).

\end{appendices}

\printbibliography[heading=bibintoc]

\end{document}